\documentclass[10pt,fleqn]{scrartcl}

\usepackage[english]{babel}
\usepackage{amsmath,amsfonts, amsthm,xcolor,amssymb}
\usepackage{paralist}
\usepackage{authblk}
\numberwithin{equation}{section}
\usepackage{mathtools, mathabx}
\usepackage[colorlinks=true,urlcolor=blue, citecolor=blue,linkcolor=blue,linktocpage,pdfpagelabels, bookmarksnumbered,bookmarksopen]{hyperref}
\usepackage{graphicx}
\usepackage{comment}
\usepackage[hyperpageref]{backref}
\usepackage{amsmath, amssymb}
\usepackage{amsthm}
\usepackage{mathrsfs}
\usepackage{tikz} 
\usepackage{geometry}
\usepackage{wasysym}
\def\eps{\varepsilon}
\def\de{\partial}

\makeatletter
\renewcommand*{\@fnsymbol}[1]{\ifcase#1\or *\or **\or ***\or ****\else\@ctrerr\fi}
\makeatother

\theoremstyle{plain} 
\newtheorem{theorem}{Theorem}[section] 
\newtheorem{proposition}[theorem]{Proposition} 
\newtheorem{corollary}[theorem]{Corollary}     
\newtheorem{lemma}[theorem]{Lemma}             

\theoremstyle{definition}
\newtheorem{definition}[theorem]{Definition}   
\newtheorem{example}[theorem]{Example}         
\theoremstyle{remark}
\newtheorem{remark}[theorem]{Remark}

\numberwithin{equation}{section}
\font\manual=manfnt
\newcommand\xqed[1]{%
	\leavevmode\unskip\penalty9999 \hbox{}\nobreak\hfill
	\quad\hbox{#1}}
\newcommand\triang{\xqed{\manual\char'170}}
\usepackage{hyperref}

\begin{document}

\title{Optimal insulation and concentration breaking for nonlinear Robin boundary value problems}
\author{
Francesco Della Pietra\thanks{Universit\`a degli studi di Napoli Federico II, Dipartimento di Matematica e Applicazioni ``R. Caccioppoli'', Via Cintia, Monte S. Angelo - 80126 Napoli, Italia. Email:
f.dellapietra@unina.it} \and
Francescantonio Oliva\thanks{``Sapienza'' Universit\`a di Roma, Dipartimento di Scienze di Base e Applicate per l'Ingegneria, Via Scarpa 16, 00161 Roma, Italia. Email: francescantonio.oliva@uniroma1.it}
}
 
\date{}

\maketitle

\begin{abstract}
\noindent \textbf{Abstract}. We consider an optimal insulation problem for a bounded domain in $\mathbb{R}^N$ driven by the $p$-Laplace operator ($p>1$). We model the convective heat transfer between the body and the environment, which corresponds, before insulation, to a nonlinear Robin boundary value problem. Assuming the body is surrounded by a thin layer of insulating material of size $\varepsilon^{\frac{1}{p-1}}$, we compute the $\Gamma$-limit of the governing energy functional as $\varepsilon \to 0^+$. Furthermore, we study the optimization of the heat content among all possible distributions of the insulating material with a fixed total mass.
Finally, we highlight a concentration breaking phenomenon. Under a suitable non-degeneracy condition, if the boundary of the domain is connected or the external temperature profile is constant, the optimal insulating layer fails to cover the entire boundary whenever the total mass is sufficiently small. This is shown to be optimal: an explicit example provides that a disconnected boundary can trigger an anomalous double-phase transition, causing the insulation to fracture again even at intermediate mass regimes.

\noindent \textbf{MSC 2020:} 49J45, 35J25, 35B06, 49R05  \\[.2cm]
\textbf{Key words and phrases:} Optimal insulation, Robin boundary conditions, $p$-Laplacian, $\Gamma$-convergence, Shape optimization
	
\end{abstract}

\begin{center}
	\begin{minipage}{.8\textwidth}
		\tableofcontents
	\end{minipage}
\end{center}

\section{Introduction}
Thermal insulation has gained a lot of interest in recent times as its importance in reducing energy consumption and mitigating climate change has been increasingly recognized. While physics and engineering predominantly explore the development of innovative materials and technological advancements to enhance thermal insulation, from the mathematical point of view an important area of research is given by shape optimization: the problem of determining the optimal geometric configuration of an insulating material to minimize heat transfer. 

The classical mathematical formulation of this problem focuses on the thermal insulation of a solid body subject to conductive heat transfer with its surroundings, typically modeled by Dirichlet boundary conditions (see for instance \cite{ab,cnt,Friedman80, HLL22, HLL24}). In this classical setting, a heated body represented by a bounded domain $\Omega \subset \mathbb{R}^N$ is surrounded by a thin layer of insulating material $\Sigma_\varepsilon$. The thickness of this layer is given by $\varepsilon h(\sigma)$, where $h$ is a profile function defined on the boundary $\partial\Omega$ and $\varepsilon$ is a small parameter. Assuming that the conductivity of the insulating layer is also of order $\varepsilon$, the physical phenomenon is governed by the minimization of the energy functional
\begin{equation*}
	G_{\varepsilon}(v,h) = \frac{1}{2}\int_{\Omega}|\nabla v|^{2}dx + \frac{\varepsilon}{2}\int_{\Sigma_{\varepsilon}}|\nabla v|^{2}dx - \int_{\Omega}fv~dx,
	\label{eq:G_eps}
\end{equation*}
where $v\in H_{0}^{1}(\Omega\cup\Sigma_{\varepsilon})$. This conventionally assumes that the external temperature is set to zero. Then the minimizer $u_\varepsilon$ satisfies a system of equations involving a transmission condition across the boundary $\partial\Omega$. As $\varepsilon \to 0$, $\Gamma$-convergence arguments show that the sequence of functionals $G_\varepsilon$ converges to a limit functional defined on $H^1(\Omega)$
\begin{equation*}
	G(v,h) = \frac{1}{2}\int_{\Omega}|\nabla v|^{2}dx + \frac{1}{2}\int_{\partial\Omega}\frac{v^{2}}{h}d\mathcal{H}^{N-1} - \int_{\Omega}fvdx,
	\label{eq:G_limit}
\end{equation*}
whose minimizers are solutions to a Poisson problem with a boundary condition of the type $h \frac{\partial u}{\partial \nu} + u = 0$ on $\partial\Omega$ \cite{ab,bu88}. Under these conditions, determining the optimal profile $h$ among all configurations with a fixed mass $m = \int_{\partial\Omega} h \, d\mathcal{H}^{N-1}$ yields an explicit proportionality between the optimal thickness and the boundary temperature.

However, when heat is transferred to the outside through convection, which is a major mode of heat transfer, Robin-type boundary conditions are best suited to the problem. Consider, for example, the heat exchange that occurs on the surfaces of objects like a boiler, a cup of tea, or a building. In these scenarios, the out-flux of heat is proportional to the temperature jump across the body surface. This convective heat exchange has been extensively studied in recent years (see for example \cite{AC25, ACNT24, ant,akk, BBK25, DpNST,DO,DOS,DpNT22}; see also \cite{cnt} for a survey on this topic). Specifically, the functional corresponding to the Robin problem reads as 
\begin{equation*}
	F_{\varepsilon}(v,h) = \frac{1}{2}\int_{\Omega}|\nabla v|^{2}dx + \frac{\varepsilon}{2}\int_{\Sigma_{\varepsilon}}|\nabla v|^{2}dx + \frac{\beta}{2}\int_{\partial\Omega_{\varepsilon}}v^{2}d\mathcal{H}^{N-1} - \int_{\Omega}fvdx,
	\label{eq:F_eps}
\end{equation*}
where $\beta > 0$ is a fixed heat transfer coefficient. The asymptotic analysis as $\varepsilon \to 0$ (see \cite{DpNST}) shows that $F_{\varepsilon,\beta}$ $\Gamma$-converges with respect to the $L^2(\mathbb{R}^N)$ topology to
\begin{equation*}
	F(v,h) = \frac{1}{2}\int_{\Omega}|\nabla v|^{2}dx + \frac{\beta}{2}\int_{\partial\Omega}\frac{v^{2}}{1+\beta h}d\mathcal{H}^{N-1} - \int_{\Omega}fvdx.
	\label{eq:F_limit}
\end{equation*}
In contrast to the simpler case of homogeneous Dirichlet conditions, determining the optimal insulating profile $h$ for the Robin case is a more challenging problem. The optimization requires finding a threshold $c_u$ such that the optimal layer $h_{opt}$ acts only where the boundary temperature exceeds $c_u$, meaning it vanishes completely on cooler parts of the boundary. In particular, if one looks for the configuration that maximizes the heat content of the domain, in \cite{DpNST} it has been proven that the ball surrounded by a uniform distribution of insulating material is the optimal shape.
Moreover, when the domain is not a ball, concentration breaking phenomena are known to appear as noted in \cite{HLL24}.

Further developments extended this framework to eigenvalue problems, investigating operators associated with convective heat transfer. The behavior of the temperature $u(t,x)$ for the corresponding heat equation depends heavily on the first eigenvalue of the elliptic differential operator. Strikingly, such optimization problems reveal a symmetry breaking phenomenon: if the domain is a ball, the convection heat transfer coefficient is sufficiently large, and the total amount of insulation is small enough, the optimal insulating profile and the associated first eigenfunction fail to be radial \cite{BBN17,BBN_Notices,DO}.

\medskip

As a natural extension of these previous investigations, this paper proposes a generalization to a nonlinear setting governed by the $p$-Laplace operator. Nonlinear diffusion operators naturally arise in the modeling of non-Newtonian fluids, glaciology, and nonlinear heat transfer phenomena where the conductivity depends on the temperature gradient. The goal of this paper is to investigate the thermal insulation of a bounded body $\Omega \subset \mathbb{R}^N$ with a prescribed heat source $f$, exchanging heat with the environment through a nonlinear convective mechanism modeled by Robin boundary conditions, with parameter $\beta>0$. 
In our setting, the body is surrounded by an insulating layer $\Sigma_\varepsilon$. However, due to the nonlinear nature of the $p$-Laplacian ($p>1$), the physical scaling of the layer requires careful adjustment. Specifically, the thickness of the layer is assumed to scale as $\varepsilon^{\frac{1}{p-1}} h(\sigma)$, ensuring that the $\Gamma$-limit process correctly captures the effective macroscopic thermal resistance of the boundary.

Furthermore, we extend the classical framework by introducing a non-homogeneous external temperature profile $g$ on the boundary. Considering a variable $g$ allows for the modeling of more realistic thermodynamic scenarios, such as bodies exposed to unevenly heated environments. The optimal insulating layer must now adapt not only to the domain's geometry and the internal heat source, but also to the spatial fluctuations of the external temperature, directly influencing the threshold conditions for the optimal material distribution.

 We finally recall that in this nonlinear framework, very few results are known: see for example \cite{ds} ($g=0$ and $\beta=+\infty$) or \cite{b}, for a different nonlinear model.

\paragraph{Setting of the problem and structure of the paper}

Let $\Omega \subset \mathbb{R}^N (N\ge 2)$ be an open, bounded set with a sufficiently smooth boundary. We model the insulating layer surrounding the domain $\Omega$ as a thin shell constructed along the outward normal direction. Let $p > 1, 0 < \eps < 1$ and let us define
\[
\Sigma_\eps := \left\{\sigma + \eps^{\frac{1}{p-1}} t h(\sigma) \nu(\sigma) : \sigma \in \de \Omega, \; t \in (0, 1)\right\},
\]
where $\nu(\sigma)$ is the outer normal to $\de \Omega$ at $\sigma$, and $h : \de \Omega \to [0, \infty)$ is a nonnegative Lipschitz function that represents the distribution of the layer's thickness at $\sigma \in \de \Omega$. The union of the thermally conductive body and the insulating material is denoted by $\Omega_\eps := \overline{\Omega} \cup \Sigma_\eps$. 

\medskip
\begin{center}
\begin{tikzpicture}
	\draw[thick] (0,0) ellipse (2cm and 1.5cm) node[below right] {\(\Omega\)};
	
	\draw[red, dashed, thick]
	(1,0) ellipse (3.7cm and 2.2cm);  
	
	\draw[thick, ->] (1.6, 1) -- (2, 1.3) node[above right] {\(\nu(\sigma)\)};
	
	\fill (1.6, 1) circle (1.5pt) node[below left] {\(\sigma\)};
	
	\draw[<->, blue, thin] (-2.7,0) -- (-2,0) node[midway, below] {\(\eps^{\frac{1}{p-1}} h(\sigma)\)};
	
	\node[red] at (1.3, 1.5) {\(\Sigma_\eps\)};
\end{tikzpicture}
\end{center}
\medskip

We consider an external temperature profile function $g$, a heat source $f$, and a fixed convective heat transfer parameter $\beta > 0$. The optimization of thermal insulation is governed by the following energy functional
\begin{equation}
\begin{aligned}
	J_\eps(v,h) = \frac{1}{p} \int_\Omega |\nabla v|^p \, dx + \frac{\eps}{p} \int_{\Sigma_\eps} |\nabla v|^p \, dx
	+ \frac{\beta}{p} \int_{\de \Omega_\eps} |v-g|^p \, d\mathcal{H}^{N-1}
	- \int_\Omega fv \, dx, 
	\\ v \in W^{1,p}(\Omega_\eps).
	\label{defJeps}
\end{aligned}	
\end{equation}
Let us stress that, in the entire work, we assume that the function $g$, originally defined on $\partial\Omega$, is extended to the layer $\Sigma_\varepsilon$ constantly along the normal direction. With a slight abuse of notation, we still denote this extension by $g$, namely  $g(\sigma + \eps^{\frac{1}{p-1}}th(\sigma)\nu(\sigma)):= g(\sigma)$ for $\sigma \in \partial\Omega$ and for $t \in [0,1]$.

Let us stress that standard methods in the calculus of variations ensure that a minimizer $u_\eps$ of $J_\eps(v, h)$ exists
\[
J_\eps(u_\eps, h) = \min_{v \in W^{1,p}(\Omega_\eps)} J_\eps(v, h).
\]
This minimizer $u_\eps$ satisfies the following system of equations
\begin{equation}
	\begin{cases} \displaystyle 
		-\Delta_p u_\eps = f & \text{in } \Omega, \\
		\displaystyle -\Delta_p u_\eps = 0 & \text{in } \Sigma_\eps, \\
		\displaystyle |\nabla u_\eps|^{p-2} \frac{\de u_\eps}{\de \nu} + \beta |u_\eps-g|^{p-2} (u_\eps-g) = 0 & \text{on } \de \Omega_\eps, \\
		\displaystyle |\nabla u_\eps^-|^{p-2} \frac{\de u_\eps^-}{\de \nu} = \eps |\nabla u_\eps^+|^{p-2} \frac{\de u_\eps^+}{\de \nu} & \text{on } \de \Omega,
	\end{cases}
	\label{eq:2}
\end{equation}
where $\Delta_p u =  \textrm{div} (|\nabla u|^{p-2} \nabla u)$ is the standard $p$-Laplace operator. Moreover, $\frac{\de u_\eps^-}{\de \nu}$ and $\frac{\de u_\eps^+}{\de \nu}$ represent the normal derivatives of \(u_\eps\) from inside and outside \(\Omega\), respectively. The last equation in \eqref{eq:2} represents the heat transmission condition between the body $\Omega$ and the thin layer $\Sigma_\eps$, reflecting that the layer's thermal conductivity is of order $\varepsilon$.

To study the asymptotic behavior of the sequence of minimizers, we extend the domain of the functional $J_\eps$ to $L^p(\mathbb{R}^N)$ by defining
\begin{equation}
	\label{defJeps2}
J_\eps(v,h) = \infty \quad \text{if } v \in L^p(\mathbb{R}^N) \setminus W^{1,p}(\Omega_\eps).
\end{equation}

The first objective of this paper is to analyze the effective behavior of the thermal insulation as the thickness parameter $\varepsilon$ approaches $0$. In Theorem \ref{teo_gammaconv} below, we prove that, under suitable assumptions on the data, the family of functionals \(J_\eps\) \(\Gamma\)-converges with respect to the strong $L^p$ topology (see Definition \ref{def_gamma} below) to a limit energy $J(v,h)$ defined as 
\begin{equation}
	J(v,h) = \frac{1}{p} \int_\Omega |\nabla v|^p \, dx + \frac{\beta}{p} \int_{\de \Omega} \frac{|v-g|^p}{(1 + \beta^{\frac{1}{p-1}} h)^{p-1}} \, d\mathcal{H}^{N-1} - \int_\Omega fv \, dx, \quad v\in W^{1,p}(\Omega).
	\label{defJ}
\end{equation}
In this limit problem, the geometric layer $\Sigma_\varepsilon$ has vanished, but its insulating effect has been encoded into a modified boundary condition. Consequently, we analyze the associated minimization problem:
\[
J(u,h) = \min_{v\in W^{1,p}(\Omega)} J(v,h).
\]
The minimizer $u \in W^{1,p}(\Omega)$ represents the temperature distribution and solves the following boundary value problem featuring a non-uniform Robin boundary condition
\begin{equation}\label{eqpbmin}
	\begin{cases}
		-\Delta_p u = f & \text{in } \Omega, \\
		\displaystyle |\nabla u|^{p-2} \frac{\partial u}{\partial \nu} + \frac{\beta |u-g|^{p-2}(u-g)}{\left(1+\beta^{\frac{1}{p-1}}h\right)^{p-1}} = 0 & \text{on } \partial\Omega.
	\end{cases}	
\end{equation}

Once the $\Gamma$-limit has been established for a fixed insulation profile $h$, we shift our focus to the problem of finding the optimal distribution of the insulating material. Given a fixed mass $m > 0$, we denote the class of admissible insulating profiles by
\[
\mathcal{H}_m(\partial\Omega) = \left\{ h \in L^1(\partial\Omega) : h \ge 0, \ \int_{\partial\Omega} h d \mathcal{H}^{N-1} = m \right\}.
\]
The goal is to determine the optimal pair $(u, h_u) \in W^{1,p}(\Omega) \times \mathcal{H}_m(\partial\Omega)$ that minimizes the functional $J(v, h)$ jointly in both variables.  In Theorem \ref{teo_min} we show  the existence and uniqueness (provided $\Omega$ is connected) of the optimal configuration. In this context, the optimal thickness $h_u(\sigma)$ is found to be strictly dependent on the boundary values of the temperature $u$ and a specific threshold parameter $c_u$.  Furthermore Theorem \ref{teo_heatcontent} gives an upper bound for the total heat content.

The final section of this work is dedicated to investigating some properties of the optimal insulation $h_u$. In problems governed by standard Dirichlet conditions, classical results assert that the insulating layer surrounds the entire boundary continuously. However, under convective (Robin) conditions, this is no longer guaranteed. 

In Theorem \ref{teo_breaking}, we prove a concentration breaking phenomenon showing that, under suitable conditions on $\Omega$ and on $g$, there exists a critical mass threshold $\overline{m} > 0$. If the total amount of available insulation is small ($m < \overline{m}$), the optimal material does not distribute itself uniformly, and it vanishes ($h=0$) on a subset of the boundary $\partial\Omega$ with positive measure.
In particular, the existence of such threshold value happens when $\Omega$ is not a ball and $g$ is constant, or, in the case of the ball, when $g$ is not constant (Corollaries \ref{cor_nonpalla} and \ref{cor_palla_noncostante}). This behavior stands in contrast to the radially symmetric case of the ball and $g$ is constant, where the insulation remains strictly positive and uniform everywhere, regardless of the mass $m$. Moreover, unexpected phenomena may appear depending on the topology of $\de\Omega$. For example, if $\Omega$ is the annulus in the plane, under suitable conditions on the difference of the temperature inside and outside $\Omega$, there exist  disjoint intervals of values of $m$ in such a way concentration breaking appears (see Example \ref{exane}), also showing that the assumptions of Theorem \ref{teo_breaking} are optimal.

\medskip

The paper is organized as follows. In Section 2, we prove the $\Gamma$-convergence of the functionals $J_\varepsilon$ to $J$. In Section 3, we analyze the limit functional, showing the existence and uniqueness of the optimal pair $(u, h_u)$ and providing the exact characterization of the insulating profile $h_u$. Furthermore, we establish an optimal upper bound for the total heat content. Finally, in Section 4, we deal with the concentration breaking phenomenon, proving the existence of the critical mass threshold $\overline{m}$ and analysing the behavior of the optimal layer. Moreover, we conclude the section with examples and remarks showing the optimality of our hypotheses. 

 \section{$\Gamma$-Convergence}
  
 This section is dedicated to the study of the $\Gamma$-convergence of the functional $J_\eps$ towards $J$ (which are defined in $\eqref{defJeps},\eqref{defJeps2},\eqref{defJ}$) as $\varepsilon\to 0^+$. Let us also stress that, for the entire section, $h : \de \Omega \to (0, \infty)$ is a fixed Lipschitz function. In order to be self-contained, we recall the notion of $\Gamma$-convergence for $J_\eps$.
 
 \begin{definition}\label{def_gamma}
 	The functional $J_\eps$ $\Gamma$-converges in $L^p(\mathbb{R}^N)$ to $J$ as $\eps\to 0^+$ if  for any $v\in L^p(\mathbb{R}^N)$, it holds:
 	\begin{itemize}  	
 		\item[i)] \textit{liminf inequality}:  for any sequence $v_\eps \in L^p(\mathbb{R}^N)$ which  converges to $v$ in $L^p(\mathbb{R}^N)$ as $\eps\to 0^+$, it holds
 		\begin{equation}\label{liminf}
 			\liminf_{\eps \to 0^+} J_\eps(v_\eps,h) \ge J(v,h);
 		\end{equation}
 		\item[ii)] \textit{limsup inequality}: there exists a sequence $v_\eps \in L^p(\mathbb{R}^N)$ which converges to $v$ in $L^p(\mathbb{R}^N)$ as $\eps\to 0^+$ such that
 		\begin{equation}\label{limsup}
 			\limsup_{\eps\to 0^+} J_\eps(v_\eps,h) \le J(v,h).
 		\end{equation}
 	\end{itemize}
 \end{definition}
 We are now ready to state and prove the main result of this section; the following $\Gamma$-convergence result holds:
 \begin{theorem}\label{teo_gammaconv}
 		Let $p>1$, let $g,h$ be Lipschitz functions defined on $\partial\Omega$ with $h> 0$ on $\partial\Omega$, let $\beta > 0$ and let $f\in L^{\frac{p}{p-1}}(\Omega)$. Then the functional $J_\eps$ (defined in \eqref{defJeps}, \eqref{defJeps2}) $\Gamma$-converges to the functional $J$ (defined in \eqref{defJ}) in the sense of Definition \ref{def_gamma}.  
 \end{theorem}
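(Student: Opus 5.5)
We work in the normal coordinates $(\sigma,t)\mapsto \sigma+\eps^{\frac1{p-1}}t h(\sigma)\nu(\sigma)$, $\sigma\in\de\Omega$, $t\in(0,1)$. Set $\delta=\eps^{\frac1{p-1}}$. Since $\de\Omega$ is smooth and $h,g$ are Lipschitz, the Jacobian of this map is $\delta h(\sigma)(1+O(\delta))$, uniformly in $(\sigma,t)$. The outer boundary $\de\Omega_\eps$ is the graph $\{\sigma+\delta h(\sigma)\nu(\sigma)\}$, and its surface element is $(1+O(\delta))\,d\mathcal H^{N-1}$, because the tangential gradient of $\delta h$ is $O(\delta)$.

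The key one-dimensional fact is the following. For fixed values $a=v(\sigma)$ on $\de\Omega$ and $b$ on the outer surface,
\[
\min_{b}\Big\{\frac{\eps}{p}\,\delta h\,\Big|\frac{b-a}{\delta h}\Big|^p+\frac{\beta}{p}|b-g|^p\Big\}=\frac{\beta}{p}\,\frac{|a-g|^p}{(1+\beta^{\frac1{p-1}}h)^{p-1}}.
\]
Indeed, since $\eps\,\delta^{1-p}=1$, the first term is $\frac1p|b-a|^p h^{1-p}$. Minimizing $\frac1p\big(h^{1-p}|x|^p+\beta|y|^p\big)$ under the constraint $x+y=a-g$ is a series-resistance computation. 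It gives $\frac1p|a-g|^p\big(h+\beta^{-\frac1{p-1}}\big)^{1-p}=\frac\beta p\,|a-g|^p\big(1+\beta^{\frac1{p-1}}h\big)^{1-p}$, which is exactly the integrand in \eqref{defJ}.

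\textbf{Liminf.} Assume $\liminf J_\eps(v_\eps,h)<\infty$ and pass to a subsequence realizing it. The term $\int_\Omega fv_\eps$ is bounded by $\|f\|_{p'}\|v_\eps\|_{L^p(\Omega)}$, which stays bounded. Hence $\|\nabla v_\eps\|_{L^p(\Omega)}$ is bounded, so $v_\eps\rightharpoonup v$ in $W^{1,p}(\Omega)$ and the traces converge strongly in $L^p(\de\Omega)$. Lower semicontinuity handles $\frac1p\int_\Omega|\nabla v_\eps|^p$, and the linear term converges. For the layer, take $v_\eps$ smooth (by density, or work with a.e.\ $\sigma$ and absolutely continuous restrictions along normals). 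We then have $\int_{\Sigma_\eps}|\nabla v_\eps|^p\ge(1-C\delta)\int_{\de\Omega}\int_0^1|\de_t v_\eps|^p(\delta h)^{1-p}\,dt\,d\sigma$. By Jensen this is at least $(1-C\delta)\int_{\de\Omega}|b_\eps-a_\eps|^p(\delta h)^{1-p}$, where $a_\eps$ is the trace on $\de\Omega$ and $b_\eps(\sigma)$ is the trace on the outer surface transported back to $\sigma$. Applying the pointwise minimization above with $g(\sigma)$ (recall $g$ is extended constantly in the normal direction) gives
\[
\frac{\eps}{p}\int_{\Sigma_\eps}|\nabla v_\eps|^p+\frac\beta p\int_{\de\Omega_\eps}|v_\eps-g|^p\ge(1-C\delta)\,\frac\beta p\int_{\de\Omega}\frac{|a_\eps-g|^p}{(1+\beta^{\frac1{p-1}}h)^{p-1}}.
\]
Strong convergence of the traces $a_\eps\to v|_{\de\Omega}$ then concludes.

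\textbf{Limsup.} By density of smooth functions in $W^{1,p}(\Omega)$, the continuity of $J(\cdot,h)$ in $W^{1,p}$, and the usual diagonal argument, it suffices to take $v\in C^1(\overline\Omega)$. First set $b(\sigma)=a(\sigma)-\lambda(\sigma)(a-g)$, with $\lambda$ the optimal fraction $\lambda=\frac{h}{h+\beta^{-1/(p-1)}}$, which is Lipschitz since $h>0$. Then define $v_\eps=v$ in $\Omega$ and $v_\eps(\sigma+\delta th\nu)=(1-t)a(\sigma)+tb(\sigma)$ in $\Sigma_\eps$. Extend $v_\eps$ by $0$ elsewhere; this is harmless, since $J_\eps$ only sees $\Omega_\eps$ and $|\Sigma_\eps|\to0$ gives $L^p$ convergence. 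The normal derivative is $(b-a)/(\delta h)$, and the tangential derivatives are bounded, with size $O(1+|\nabla_\tau h|\,|b-a|/h)$. Their contribution to $\eps\int_{\Sigma_\eps}|\nabla v_\eps|^p$ is $O(\eps\delta)\to0$, and the cross terms are controlled by $|A+B|^p\le(1+\eta)|A|^p+C_\eta|B|^p$. So the layer energy tends to $\frac1p\int h^{1-p}|b-a|^p$, and the boundary term tends to $\frac\beta p\int|b-g|^p$. By the choice of $\lambda$ their sum is the limit boundary energy.

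\textbf{Main obstacle.} The delicate step is the liminf. It needs the change of variables with uniform Jacobian control, and it needs the weak slicing argument for a general $W^{1,p}(\Omega_\eps)$ function, in particular that the traces of $v_\eps$ on $\de\Omega$ from inside and from the layer coincide. This uses the smoothness of $\de\Omega$ together with the lower bound $h\ge h_{\min}>0$, which makes the coordinate map bi-Lipschitz at scale $\delta$.
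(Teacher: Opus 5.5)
Your argument is correct and follows essentially the paper's route. For the liminf, both use normal coordinates with uniform Jacobian control, Jensen along normal segments, and the pointwise series-resistance bound, which is the paper's inequality \eqref{lem_dis}. For the limsup, both interpolate linearly in the normal variable from $v$ on $\partial\Omega$ to $g+(v-g)/(1+\beta^{\frac1{p-1}}h)$ on $\partial\Omega_\varepsilon$; your only departures are reducing first to $v\in C^1(\overline\Omega)$ by density (the paper works directly with a Sobolev extension $\widehat v$) and closing the liminf by strong trace convergence instead of Fatou.

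One slip needs fixing. Outside $\Omega_\varepsilon$ you must set $v_\varepsilon=v$, not $0$, and your density approximants must also equal $v$ off $\Omega$. Otherwise the recovery sequence converges to $v\mathbf{1}_\Omega$ rather than to $v$ in $L^p(\mathbb{R}^N)$, and $v$ need not vanish outside $\Omega$.
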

\begin{proof}
 We first prove the liminf inequality.  Then let $v \in L^p(\mathbb{R}^N)$ and let $v_\eps\in L^p(\mathbb{R}^N)$ converge to $v$ strongly in $L^p(\mathbb{R}^N)$ as $\eps \to 0^+$. We aim to show the validity of \eqref{liminf}.
	Without losing  generality we can then assume that $v_\eps\in W^{1,p}(\Omega_\eps)$, that $\liminf_{\eps \to 0} J_\eps(v_\eps) < +\infty$ and that the liminf is actually a limit; therefore a simple calculation implies that $v_\eps$  is bounded in $W^{1,p}(\Omega)$ with respect to $\eps$ meaning that $v_\eps \rightharpoonup v$ weakly in $W^{1,p}(\Omega)$.
 	
 	By weak lower semicontinuity and the strong convergence of $v_\eps$ in $L^{p}(\Omega)$ one gains
 	\begin{equation}\label{liminf1}
 	\liminf_{\eps \to 0^+} \left( \frac{1}{p}\int_{\Omega}\left|\nabla v_\eps\right|^{p}\,dx - \int_{\Omega}fv_\eps \,dx \right) \ge \frac{1}{p}\int_{\Omega}\left|\nabla v\right|^{p}\,dx - \int_{\Omega}fv \,dx.
 	\end{equation}
 	Then it remains to show the lower bound for the layer and for the outer boundary terms, namely we need to prove that
 	\begin{equation} \label{liminf1bis}
 		\liminf_{\eps \to 0^+} \left( \frac{\eps}{p}\int_{\Sigma_\eps}\left|\nabla v_{\eps}\right|^{p}\,dx + \frac{\beta}{p}\int_{\de\Omega_\eps}\left|v_{\eps}-g\right|^{p}\,d\mathcal{H}^{N-1} \right) \ge \frac{\beta}{p}\int_{\de\Omega} \frac{\left|v-g\right|^{p}}{(1 + \beta^{\frac{1}{p-1}} h)^{p-1}} \,d\mathcal{H}^{N-1}.
 	\end{equation}
 	For the first term on the left-hand of \eqref{liminf1bis} we use the change of variables $x = \sigma + s\nu(\sigma)$ for $x \in \Sigma_\eps$, $s \in (0, \eps^{\frac{1}{p-1}} h(\sigma))$ so that the volume element is given by $dx = \mathcal J(\sigma, s) \,d\mathcal{H}^{N-1}(\sigma) ds$, where $\mathcal J(\sigma, s) = 1 + o(1)$ uniformly on $\Sigma_\eps$ for $\varepsilon\to 0^+$ as both $\nu$ and $h$ are Lipschitz continuous functions. Thus there exists $C>0$ such that $\mathcal J(\sigma, s) \ge 1 - C\eps^{\frac{1}{p-1}}$ for $\eps$ small enough. Hence one has
 	\begin{equation}\label{liminf2}
 	\begin{aligned}
 		&\frac{\eps}{p}\int_{\Sigma_\eps}\left|\nabla v_{\eps}\right|^{p}\,dx = \frac{\eps}{p} \int_{\de\Omega} \int_0^{\eps^{\frac{1}{p-1}} h(\sigma)} \left|\nabla v_\eps(\sigma+s\nu(\sigma))\right|^p \mathcal J(\sigma, s) \,ds \,d\mathcal{H}^{N-1}(\sigma) \\
 		&\ge \frac{\eps(1-C \eps^{\frac{1}{p-1}})}{p} \int_{\de\Omega} \left( \int_0^{\eps^{\frac{1}{p-1}} h(\sigma)} \left|\nabla v_\eps(\sigma+s\nu(\sigma))\right|^p \,ds \right) \,d\mathcal{H}^{N-1}(\sigma).
 	\end{aligned}
 	\end{equation}
 	Now observe that it follows from $|\nabla v_\eps|^p \ge |\nabla v_\eps \cdot \nu|^p$ and from the H\"older inequality that
 	\[
 	\begin{aligned}
 		\int_0^{\eps^{\frac{1}{p-1}} h(\sigma)} |\nabla v_\eps(\sigma+s\nu(\sigma))|^p \,ds &\ge \int_0^{\eps^{\frac{1}{p-1}} h(\sigma)} |\nabla v_\eps(\sigma+s\nu(\sigma)) \cdot \nu(\sigma)|^p \,ds \\
 		&\ge \frac{1}{(\eps^{\frac{1}{p-1}} h(\sigma))^{p-1}} \left( \int_0^{\eps^{\frac{1}{p-1}} h(\sigma)} |\nabla v_\eps(\sigma+s\nu(\sigma)) \cdot \nu(\sigma)| \,ds \right)^p \\
 		&\ge \frac{1}{(\eps^{\frac{1}{p-1}} h(\sigma))^{p-1}} \left| \int_0^{\eps^{\frac{1}{p-1}} h(\sigma)} \nabla v_\eps(\sigma+s\nu(\sigma)) \cdot \nu(\sigma) \,ds \right|^p \\
 		&= \frac{\left|v_\eps(\sigma + \eps^{\frac{1}{p-1}} h(\sigma)\nu(\sigma)) - v_\eps(\sigma )\right|^p}{(\eps^{\frac{1}{p-1}} h(\sigma))^{p-1}}.
 	\end{aligned}
 	\]
 	Then one can gather the previous in \eqref{liminf2}, yielding to
	\begin{equation}\label{liminf3}
	\begin{aligned}
	\frac{\eps}{p}\int_{\Sigma_\eps}\left|\nabla v_{\eps}\right|^{p}\,dx &\ge \frac{1-C \eps^{\frac{1}{p-1}}}{p} \int_{\de\Omega} \frac{\left|v_\eps(\sigma + \eps^{\frac{1}{p-1}} h(\sigma)\nu(\sigma)) - v_\eps(\sigma )\right|^p}{h(\sigma)^{p-1}} \,d\mathcal{H}^{N-1}.
\end{aligned}
	\end{equation}
	
	For the second term on the left-hand of \eqref{liminf1bis} one can set $x=\sigma + \eps^{\frac{1}{p-1}} h(\sigma)\nu(\sigma)$ and, as $h$ is a Lipschitz function, one can reason on the Jacobian similar to what was done for the term on $\Sigma_\eps$; this leads to
	\begin{equation}\label{liminf4}
	\begin{aligned}
 	&\frac{\beta}{p}\int_{\de\Omega_\eps}\left|v_{\eps}-g\right|^{p}\,d\mathcal{H}^{N-1} 
 	\\
 	&\ge  \frac{\beta(1-C\eps^{\frac{1}{p-1}})}{p} \int_{\de\Omega} \left|v_\eps(\sigma + \eps^{\frac{1}{p-1}} h(\sigma)\nu(\sigma))- g(\sigma)\right|^p \,d\mathcal{H}^{N-1},
 	\end{aligned}
 	\end{equation}
 	for some constant $C>0$ and for $\eps$ small enough as, recall, we set $g(\sigma + \eps^{\frac{1}{p-1}}h(\sigma)\nu(\sigma)):= g(\sigma)$. 
 	Then, having in force \eqref{liminf3} and \eqref{liminf4} and in order to prove \eqref{liminf1bis}, we need
 	\begin{equation*}
 	\begin{aligned}
 		&\frac{1-C \eps^{\frac{1}{p-1}}}{p} \int_{\de\Omega} \left(\frac{\left|v_\eps(\sigma + \eps^{\frac{1}{p-1}} h(\sigma)\nu(\sigma)) - v_\eps(\sigma )\right|^p}{h(\sigma)^{p-1}}  +\beta \left|v_\eps(\sigma + \eps^{\frac{1}{p-1}} h(\sigma)\nu(\sigma))-g(\sigma)\right|^p \right) \,d\mathcal{H}^{N-1} 
 		\\
 		& \ge 
 		\frac{\beta}{p}\int_{\de\Omega} \frac{\left|v-g\right|^{p}}{(1 + \beta^{\frac{1}{p-1}} h)^{p-1}} \,d\mathcal{H}^{N-1},
 	\end{aligned}
 	\end{equation*}
 	which means proving 
 		{\small \begin{equation}\label{liminf4bis}
 		\begin{aligned}
 			&\liminf_{\eps \to 0^+}\frac{1}{p} \int_{\de\Omega} \left(\frac{\left|v_\eps(\sigma + \eps^{\frac{1}{p-1}} h(\sigma)\nu(\sigma)) - v_\eps(\sigma )\right|^p}{h(\sigma)^{p-1}}  +\beta \left|v_\eps(\sigma + \eps^{\frac{1}{p-1}} h(\sigma)\nu(\sigma))- g(\sigma)\right|^p \right) \,d\mathcal{H}^{N-1}
 			\\
 			& \ge 
 			\frac{\beta}{p}\int_{\de\Omega} \frac{\left|v-g\right|^{p}}{(1 + \beta^\frac{1}{p-1} h)^{p-1}} \,d\mathcal{H}^{N-1},
 		\end{aligned}
 	\end{equation}
 }
 	as it holds that 
 	\begin{equation*}
 	\begin{aligned}
 		&\frac{C \eps^{\frac{1}{p-1}}}{p} \int_{\de\Omega} \left(\frac{\left|v_\eps(\sigma + \eps^{\frac{1}{p-1}} h(\sigma)\nu(\sigma)) - v_\eps(\sigma )\right|^p}{h(\sigma)^{p-1}}  +\beta \left|v_\eps(\sigma + \eps^{\frac{1}{p-1}} h(\sigma)\nu(\sigma))-g(\sigma)\right|^p \right) \,d\mathcal{H}^{N-1} 
 	\end{aligned}
 \end{equation*}
converges to $0$ as $\eps \to 0^+$. Indeed, thanks to \eqref{liminf3} and \eqref{liminf4}, the integral is bounded by a constant multiple of $J_\eps(v_\eps)$, which is uniformly bounded for $\eps>0$.

 Now observe that, applying \eqref{lem_dis} with $a= v_\eps(\sigma + \eps^{\frac{1}{p-1}} h(\sigma)\nu(\sigma))- g(\sigma) $ and $b= v_\eps(\sigma) - g(\sigma)$ and with $\lambda= h$ and $\eta= \beta$, one has
 	\begin{equation}\label{liminf5}
 \begin{aligned}
 	&\frac{1}{p}\int_{\de\Omega} \left( \frac{\left|v_\eps(\sigma + \eps^{\frac{1}{p-1}} h(\sigma)\nu(\sigma)) - v_\eps(\sigma )\right|^p}{h(\sigma)^{p-1}} + \beta \left|v_{\eps}(\sigma+\eps^{\frac{1}{p-1}} h(\sigma)\nu(\sigma)) - g(\sigma)\right|^{p} \right) d\mathcal{H}^{N-1} 
 	\\
 	&\ge \frac{\beta}{p}\int_{\de\Omega}  \frac{\left| v_\eps(\sigma )- g(\sigma)\right|^p}{(1+ \beta^{\frac{1}{p-1}}h(\sigma))^{p-1}}   d\mathcal{H}^{N-1}.
 \end{aligned}
 \end{equation}
Now an application of Fatou's Lemma as $\eps\to 0^+$ into inequality \eqref{liminf5} gives the validity of \eqref{liminf4bis}.
In particular this shows \eqref{liminf1bis} which, jointly with \eqref{liminf1}, proves the liminf inequality \eqref{liminf}.

\medskip

Let us now focus on proving the limsup inequality. Without loss of generality, we consider the nontrivial case 
$v\in L^p(\mathbb{R}^N)$ such that
$v|_\Omega\in W^{1,p}(\Omega)$, and consider
\[
\widehat v:=E(v|_\Omega)\in W^{1,p}(\mathbb{R}^N)
\]
a Sobolev extension of its restriction to $\Omega$.
We define $v_\eps\in L^p(\mathbb{R}^N)$ as
\[
v_\eps(x):=
\begin{cases}
\displaystyle
\widehat v(x)
-\frac{\beta^{\frac1{p-1}}d(x)
(\widehat v(x)-g(x))}
{\eps^{\frac1{p-1}}
(1+\beta^{\frac1{p-1}}h(x))},
&x\in\Omega_\eps,\\[8pt]
v(x),&x\in\mathbb{R}^N\setminus\Omega_\eps,
\end{cases}
\]
where $d(x)$ represents the distance from $x$ to $\Omega$.  In particular, it is worth mentioning that also $h$, as well as we did with $g$, is extended into the layer $\Sigma_{\varepsilon}$ constantly along the outward normal direction. Therefore, since $v=\widehat v$ almost everywhere in $\Omega$, the functions
$v_\eps$ and $v$ differ only in $\Sigma_\eps$. Moreover,
\[
\|v_\eps-v\|_{L^p(\mathbb{R}^N)}^p
\leq C\int_{\Sigma_\eps}
\bigl(|\widehat v|^p+|v|^p+|g|^p\bigr)\,dx
\stackrel{\eps \to 0}{\to } 0.
\]
Thus $v_\eps\to v$ strongly in $L^p(\mathbb{R}^N)$. Then, as $v_{\varepsilon}=v$ in $\Omega$, we just need to show that
\begin{equation}\label{limsup1}
\limsup_{\varepsilon \rightarrow 0}\left(\frac{\varepsilon}{p} \int_{\Sigma_{\varepsilon}}\left|\nabla v_{\varepsilon}\right|^p d x+\frac{\beta}{p} \int_{\partial \Omega_{\varepsilon}}\left|v_{\varepsilon}-g\right|^p d \mathcal{H}^{N-1}\right) \leqslant \frac{\beta}{p} \int_{\partial \Omega} \frac{|v-g|^p}{\left(1+\beta^{\frac{1}{p-1}} h\right)^{p-1}} d \mathcal{H}^{N-1}.
\end{equation}
We start analyzing the layer term; through the same change of variable already employed for proving the liminf inequality, we get
\begin{equation}\label{limsup2bis}
\begin{aligned}
	\frac{\varepsilon}{p} \int_{\Sigma_{\varepsilon}}\left|\nabla v_{\varepsilon}\right|^p d x & =\frac{\varepsilon}{p} \int_{\partial \Omega}\left(\int_0^{\varepsilon^{\frac{1}{p-1}} h(\sigma)}\left|\nabla v_{\varepsilon}(\sigma+s \nu(\sigma))\right|^p J(\sigma, s) d s\right) d \mathcal{H}^{N-1}(\sigma) \\
	& \leqslant \frac{\varepsilon\left(1+C \varepsilon^{\frac{1}{p-1}}\right)}{p} \int_{\partial \Omega}\left(\int_0^{\varepsilon^{\frac{1}{p-1}} h(\sigma)}\left|\nabla v_{\varepsilon}(\sigma+s \nu(\sigma))\right|^p d s\right) d \mathcal{H}^{N-1}(\sigma)
\end{aligned}
\end{equation}
since $\mathcal{J}(\sigma, s) \leqslant 1+C \varepsilon^{\frac{1}{p-1}}$ for $\varepsilon$ small enough. In particular one can pass to the limit $\varepsilon \rightarrow 0$ in \eqref{limsup2bis} in order to obtain
\begin{equation}\label{limsup2}
\limsup_{\varepsilon \rightarrow 0} \frac{\varepsilon}{p} \int_{\Sigma_{\varepsilon}}\left|\nabla v_{\varepsilon}\right|^p d x \leqslant \frac{\beta}{p} \int_{\partial \Omega} \frac{\beta^{\frac{1}{p-1}} h|v-g|^p}{\left(1+\beta^{\frac{1}{p-1}} h\right)^p} d \mathcal{H}^{N-1},
\end{equation}
where we used that $\left|\nabla v_{\varepsilon}\right|^p \leqslant\left(1+\varepsilon^{\frac{1}{p-1}}\right)^{p-1}\left( \frac{\beta^{\frac{p}{p-1}}| \widehat v-g|^p}{\varepsilon^{\frac{p}{p-1}}\left(1+\beta^{\frac{1}{p-1}} h\right)^p}+\frac{j(x)}{\varepsilon}\right)$ where $j$ is a summable function not depending on $\varepsilon$. Indeed the previous estimate can be checked by calculating the normal and tangential components of the gradient and using also (A.2) by fixing $\gamma^{-1}=\left(1+\varepsilon^{\frac{1}{p-1}}\right)$ and noting that the tangential gradient is actually bounded in $\varepsilon$.

Now, we focus on the second term on the left-hand of \eqref{limsup1}; first observe that a change of variable yields
	\begin{equation}
\begin{aligned}\label{limsup3_0}
	&\int_{\partial \Omega_{\varepsilon}} |v_{\eps}-g|^p d \mathcal{H}^{N-1}
	\\
	&=\int_{\partial \Omega} |v_{\eps}(\sigma+\eps^{\frac{1}{p-1}} h(\sigma) \nu(\sigma))-g(\sigma+\eps^{\frac{1}{p-1}} h(\sigma) \nu(\sigma))|^p \mathcal{J}(\sigma) d \mathcal{H}^{N-1},
\end{aligned}
\end{equation}
where one has $\mathcal{J}(\sigma) \le 1 + C\eps^{\frac{1}{p-1}}$.
Now observe that, on $\partial\Omega_\eps$ where clearly $d= \eps^{\frac{1}{p-1}}h$, one has 
\begin{equation}\label{limsup3}
v_{\varepsilon}-g = (\widehat v - g) -  \frac{\beta^{\frac{1}{p-1}}h(\widehat v -g)}{1+\beta^{\frac{1}{p-1}}h} = (\widehat v - g) \left( 1 - \frac{\beta^{\frac{1}{p-1}}h}{1+\beta^{\frac{1}{p-1}}h} \right) =  \frac{\widehat v - g}{1+\beta^{\frac{1}{p-1}}h}.
\end{equation}
Therefore, it follows from \eqref{limsup3_0} and \eqref{limsup3} that
\begin{equation}\label{limsup4}
	 \lim_{\eps\to 0}\frac{\beta}{p} \int_{\de \Omega_{\varepsilon}}\left|v_{\varepsilon}-g\right|^p d \mathcal{H}^{N-1} = \frac{\beta}{p}\int_{\partial\Omega} \frac{|v-g|^p}{(1+\beta^{\frac{1}{p-1}}h)^p} \, d\mathcal{H}^{N-1},
\end{equation}
where we used that $g(\sigma)$ is constant along the normal direction to $\partial\Omega$. 
Now gathering together \eqref{limsup2} and \eqref{limsup4} one obtains \eqref{limsup1}. 
This concludes the proof.
\end{proof}

 \section{The study of the optimal insulation}
 
 In this section we deal with a varying $h$ in order to achieve the optimal insulation for $\Omega$. To this aim we fix the total amount of insulating material that is, for $m>0$, we set
 \[
 \mathcal{H}_m(\partial\Omega) = \left\{ h \in L^1(\partial\Omega) : h \ge 0, \ \int_{\partial\Omega} h d \mathcal{H}^{N-1} = m \right\}.
 \]
 Here we analyse the limit functional $J(v,h)$ defined in \eqref{defJ} by studying the following minimization problem
 \begin{equation}\label{pbmin}
 \min_{(v,h)\in W^{1,p}(\Omega) \times \mathcal{H}_m(\partial\Omega)}J(v,h).
 \end{equation} 
 The main theorem of this section is the following.

 \begin{theorem}\label{teo_min}
 	Let $p>1$, let $\beta, m>0$, let $g \in L^p(\partial\Omega)$ and let 
 	$f\in L^{\frac{p}{p-1}}(\Omega)$.
 	Then there exists a pair $(u,h_u) \in W^{1,p}(\Omega) \times \mathcal{H}_m(\partial\Omega)$ minimizing \eqref{pbmin}.
 	Moreover, the optimal distribution $h_u$ is given by
 	\begin{equation}\label{defh}
 		h_u(\sigma) := \begin{cases}
 			\displaystyle \frac{|u(\sigma)-g(\sigma)|}{c_{u}\beta^{\frac{1}{p-1}}} - \frac{1}{\beta^{\frac{1}{p-1}}} & \text{if } |u(\sigma)-g(\sigma)| > c_u, \\
 			0 & \text{otherwise,}
 		\end{cases}
 	\end{equation}
where $c_u \ge 0$ is the unique constant satisfying
 \begin{equation}\label{eq:c_definition}
 	c_u = \left( \frac{1}{|\{ |u-g| > c_u \}| + m\beta^{\frac{1}{p-1}}} \right) \int_{\{ |u-g| > c_u \}} |u-g| \, d\mathcal{H}^{N-1}.
 \end{equation}
 Moreover $c_u = 0$ if and only if $u = g \ \mathcal{H}^{N-1}\text{-a.e. on } \partial\Omega$.	
 
 The pair $(u,h_u)$ is also a solution to \eqref{eqpbmin}. Finally, if the domain $\Omega$ is connected and $\int_\Omega f \, dx \neq 0$, the minimizing pair $(u,h_u)$ is unique.
 \end{theorem}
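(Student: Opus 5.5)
The approach is to reduce the joint minimization to a minimization in $v$ alone. For fixed $v\in W^{1,p}(\Omega)$ we first solve the inner problem in $h$:
\[
\min_{h\in\mathcal H_m(\partial\Omega)} \int_{\partial\Omega}\frac{|v-g|^p}{(1+\beta^{\frac1{p-1}}h)^{p-1}}\,d\mathcal H^{N-1}.
\]
Set $w=|v-g|$ and $k=1+\beta^{\frac1{p-1}}h\ge 1$, so that $\int_{\partial\Omega}k=|\partial\Omega|+m\beta^{\frac1{p-1}}=:M$. The integrand $w^p/k^{p-1}$ is convex and decreasing in $k$. The KKT/Lagrange conditions then give $k=w/c$ where $w>c$, and $k=1$ otherwise. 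This is exactly \eqref{defh} with $c=c_v$.

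The constant $c_v$ is fixed by the mass constraint. This constraint reads $\int_{\{w>c\}}(w/c-1)=m\beta^{\frac1{p-1}}$, which rearranges to \eqref{eq:c_definition}. The map $c\mapsto \int_{\{w>c\}}(w-c)/c$ is continuous and strictly decreasing from $+\infty$ to $0$ on $(0,\operatorname{ess\,sup} w)$ when $w\not\equiv 0$. This gives existence and uniqueness of $c_v>0$, and $c_v=0$ exactly when $w=0$ a.e. Optimality of this $h$ (not just criticality) follows by convexity. Alternatively, we can check it directly with the tangent-line inequality $w^p/k^{p-1}\ge p\,c^{p-1}w-(p-1)c^p k$ for $w\ge c k$ together with the case $k\ge1$. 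The minimal value is then an explicit functional
\[
\Phi(v)=c_v^{p-1}\int_{\{w>c_v\}}w+\int_{\{w\le c_v\}}w^p.
\]
Up to constants, this is $\min_h \int w^p/k^{p-1}$.

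Existence of an optimal pair now follows by the direct method applied to $\tilde J(v)=\frac1p\int|\nabla v|^p+\frac\beta p\Phi(v)-\int fv$. As an infimum over $h$ of functionals that are convex and continuous in $v$, the functional $\Phi$ is convex, and it is continuous in $L^p(\partial\Omega)$. It is therefore weakly lower semicontinuous on $W^{1,p}$ through the compact trace embedding.

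The main obstacle is coercivity, since the boundary term is weakened by the insulation. We will use the bound $\Phi(v)\ge \|v-g\|_{L^1}^p/M^{p-1}$. This follows from Hölder: $\int w\le(\int w^p/k^{p-1})^{1/p}(\int k)^{1/p'}$. Combined with the Poincaré-type inequality $\|v\|_{L^p(\Omega)}\le C(\|\nabla v\|_p+\|v\|_{L^1(\partial\Omega)})$, this controls the $W^{1,p}$ norm and beats the linear term $\int fv$. Taking a minimizer $u$ and $h_u$ as above yields a minimizing pair.

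For fixed $h_u$, $u$ minimizes $J(\cdot,h_u)$, so its Euler--Lagrange equation is \eqref{eqpbmin}.

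For uniqueness, the reduced functional $\tilde J$ is convex. If $u_1,u_2$ are two minimizers, strict convexity of $\int|\nabla v|^p$ modulo constants forces $\nabla u_1=\nabla u_2$ on the connected set $\Omega$, so $u_2=u_1+t$. We then use the Euler--Lagrange equation: testing \eqref{eqpbmin} with $1$ gives
\[
\int_\Omega f=\beta\int_{\partial\Omega}\frac{|u-g|^{p-2}(u-g)}{k^{p-1}}.
\]
On the segment $u_1+st$ the functional $\tilde J$ is constant. Its $v$-independent parts are affine in $s$, so the map $s\mapsto\Phi(u_1+st)$ must be affine with slope $\frac{p}{\beta}\,t\int_\Omega f\neq0$. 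Since $\Phi$ is built from the strictly monotone profile $|\cdot|^p$ and $\int f\ne0$ prevents $u-g\equiv0$, a careful check should show that $\Phi$ cannot be affine along constant shifts unless $t=0$. This would give $u_1=u_2$, and then $h_{u_1}=h_{u_2}$ by the uniqueness of $c_u$ in \eqref{eq:c_definition}. This last verification is the most delicate step. If it fails, I would instead compare the boundary identities above for $u_1$ and $u_1+t$, using that the flux $\int f$ is the same for both while it is strictly monotone in $t$.
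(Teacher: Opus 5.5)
Your existence argument is sound and follows the paper's route. You reduce to $v$ via the inner problem in $h$, use the H\"older bound for coercivity, and get lower semicontinuity through the compact trace. Your tangent-line/KKT treatment of the inner problem is an equivalent substitute for Proposition~\ref{prop_bordo}. One justification is wrong, though: an infimum of functionals convex in $v$ need not be convex. $\Phi$ is convex because $(v,h)\mapsto |v-g|^p(1+\beta^{\frac1{p-1}}h)^{1-p}$ is \emph{jointly} convex (perspective of $|\cdot|^p$, Lemma~\ref{lemmaconvex}) and $\mathcal H_m(\partial\Omega)$ is convex, so partial minimization preserves convexity. You need exactly this for the uniqueness part.

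The genuine gap is uniqueness. The decisive step is left as ``a careful check should show'', and both versions you propose are false as stated. Let $M=P(\Omega)+m\beta^{\frac1{p-1}}$, and let $v-g$ equal $A>0$ on $S_+$ and $-A$ on $S_-=\partial\Omega\setminus S_+$, with $\mathcal H^{N-1}(S_\pm)=P(\Omega)/2$. For $|\tau|<Am\beta^{\frac1{p-1}}/M$ one has $|v-g+\tau|>c:=P(\Omega)A/M$ on all of $\partial\Omega$, $c_{v+\tau}=c$, and $\Phi(v+\tau)=c^{p-1}\int_{\partial\Omega}|v-g+\tau|=c^pM$. So $\Phi$ \emph{can} be affine (indeed constant) along constant shifts with $t\ne0$. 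Likewise the flux $\tau\mapsto\beta\int_{\partial\Omega}|v-g+\tau|^{p-2}(v-g+\tau)(1+\beta^{\frac1{p-1}}h_{v+\tau})^{1-p}$ is not strictly monotone; it vanishes on that interval. What is true, and what you must prove, is that such flatness forces zero slope, which contradicts the slope $\frac p\beta t\int_\Omega f\neq0$.

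The missing idea is the pointwise equality case of the joint convexity. The midpoint of $(u_1,h_{u_1})$ and $(u_2,h_{u_2})$ is also a minimizer, so by Lemma~\ref{lemmaconvex} one gets, a.e. on $\partial\Omega$ and with $\gamma=\beta^{\frac1{p-1}}$,
\[
r:=\frac{u_1-g}{1+\gamma h_{u_1}}=\frac{u_1+t-g}{1+\gamma h_{u_2}},\qquad E_i:=\frac{|u_i-g|^p}{(1+\gamma h_{u_i})^{p-1}}=|r|^p(1+\gamma h_{u_i}).
\]
By \eqref{defh}, $|r|\le c_{u_i}$ everywhere and $|r|=c_{u_i}$ where $h_{u_i}>0$.
\begin{itemize}
\item If $h_{u_1}=h_{u_2}$ a.e., the ratio identity gives $t=0$.
\item Otherwise, since both profiles have mass $m$, the sets $\{h_{u_2}>h_{u_1}\}$ and $\{h_{u_1}>h_{u_2}\}$ both have positive measure. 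This gives $c_{u_2}\le c_{u_1}$ and $c_{u_1}\le c_{u_2}$, so both equal some $c$. Hence $E_2-E_1=c^p\gamma(h_{u_2}-h_{u_1})$ a.e. and $\int_{\partial\Omega}(E_2-E_1)=0$. Therefore $0=J(u_2,h_{u_2})-J(u_1,h_{u_1})=-t\int_\Omega f$, so $t=0$.
\end{itemize}
This is the content of the paper's four-case analysis, and without it your uniqueness proof is incomplete.
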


\begin{example}\label{ex:counterexample}
	If $\int_\Omega f \, dx = 0$, the uniqueness of the optimal insulating profile $h$ can fail. For simplicity, let us consider the case in which $\Omega = B_1$ is the unit ball in $\mathbb{R}^N$, $p=2$, $g=0$, and let $\beta, m > 0$. 
	
	Consider the radial function $u(x) = \frac{(1 - |x|^2)^2}{4} $. It is immediate to check that $u=0$ and $\frac{\partial u}{\partial \nu} = 0$ on $\partial B_1$, and $u$ satisfies
	\[
 -\Delta u(x) = f(x)=N - (N+2)|x|^2,
	\]
	 where $\int_\Omega f \ dx = 0$.
	For any admissible insulation profile $h \in \mathcal{H}_m(\partial B_1)$, the function $u$ trivially satisfies the Robin boundary condition $\frac{\partial u}{\partial \nu} + \frac{\beta u}{1+\beta h} = 0$ on $\partial B_1$, since both terms vanish identically. Moreover, as the functional $J(\cdot, h)$ is strictly convex, $u$ is its unique global minimizer in $W^{1,2}(B_1)$ for any fixed $h$. The corresponding minimum energy is given by
	\[
	J(u, h) = \frac{1}{2} \int_{B_1} |\nabla u|^2 \, dx - \int_{B_1} f u \, dx,
	\]
	which is independent of $h$ due to the fact that $u=0$ on $\partial B_1$. 
	Therefore, the pair $(u, h)$ is a global minimizer of the joint problem \eqref{pbmin} for any choice of $h \in \mathcal{H}_m(\partial B_1)$. This shows that while the optimal temperature $u$ remains unique, the optimal insulation $h_u$ is not unique.
	
	This example highlights the physical meaning of the condition $\int_\Omega f \, dx \neq 0$: it prevents the body from perfectly insulating itself ($\frac{\partial u}{\partial \nu}=0$ on $\partial\Omega$). If $\int_\Omega f \neq 0$, the trace of $u$ cannot vanish everywhere, ensuring $c_u > 0$ and thus selecting a unique optimal configuration $h_u$.

	Furthermore, the requirement that $\Omega$ is connected is also necessary. To see this, assume for simplicity $p=2$, $f=1$ and $g=0$, and let $\Omega$ be composed of two identical disjoint balls, say $\Omega = B_1 \cup B_2$. Let $m>0$ be large enough. Then, there exists a symmetric global minimizer $(u_0, h_0)$, where $m$ is equally split between $B_1$ and $B_2$, and $h_0 > 0$ everywhere on $\partial \Omega$. 
Integrating $-\Delta u_0 = 1$ over $B_1$ and using the Robin boundary condition, we obtain
\[
|B_1| = \int_{\partial B_1} \frac{\beta u_0}{1+\beta h_0}\,d\mathcal{H}^{N-1}.
\]
Since $h_0$ satisfies the optimality condition $1+\beta h_0 = \frac{u_0}{c_{u_0}}$ on the boundary, we deduce
\[
|B_1| = \beta c_{u_0} P(B_1).
\]
Now, for any sufficiently small constant $k \in \mathbb{R}$ (such that $h$ remains non-negative), we can construct a new configuration $(u_k, h_k)$. By exploiting the fact that the domain is not connected, we shift the temperatures and the insulating mass with opposite signs on each component:
\[
u_k = u_0 + k, \quad h_k = h_0 + \frac{k}{c_{u_0} \beta} \quad \text{on } B_1,
\]
\[
u_k = u_0 - k, \quad h_k = h_0 - \frac{k}{c_{u_0} \beta} \quad \text{on } B_2.
\]
As $P(B_1) = P(B_2)$, the total mass remains strictly invariant, that is $\int_{\partial\Omega} h_k \, d\mathcal{H}^{N-1} = m$. Moreover, by construction, the ratio $\frac{u_k}{1+\beta h_k}$ is identically equal to $c_{u_0}$ on the whole boundary $\partial\Omega$, matching perfectly $\frac{u_0}{1+\beta h_0}$. This implies that $c_{u_k} = c_{u_0}$, the local optimality condition is preserved (see also Proposition \ref{prop_convexity}, equation \eqref{cond_conv}).

To verify that $(u_k, h_k)$ is still a global minimizer, we compute the energy variation. The gradient term is unaffected by constant shifts, and the boundary integrand simplifies to $\frac{\beta}{2} \frac{u_k^2}{1+\beta h_k} = \frac{\beta c_{u_0}}{2} u_k$. Thus, the energy variation on $B_1$ is simply
\[
\delta_1 = k \left( \frac{\beta c_{	u_0}}{2} P(B_1) - |B_1| \right) = -\frac{k}{2} |B_1|.
\] 
Analogously, the energy variation on $B_2$ is $\delta_2 = \frac{k}{2} |B_2|$. 
Since $|B_1|=|B_2|$, the total energy variation vanishes. This shows that the problem admits infinitely many global minimizers. \triang
\end{example}

Before proving the main theorem, we establish a few preliminary results, starting with a proposition concerning the boundary term minimization.
\begin{proposition}\label{prop_bordo}
	Let $p>1$, let $\beta,m > 0$, let $v,g \in L^p(\partial\Omega)$, and let $h_v \in L^1(\partial\Omega)$ be the function defined by \eqref{defh} where $c_v$ is the unique constant given by \eqref{eq:c_definition} and it holds $c_v = 0$ if and only if $v = g \ \mathcal{H}^{N-1}\text{-a.e. on } \partial\Omega$.	
	Then $h_v$ is a solution to the minimum problem
\begin{equation}\label{probminh}
	\min_{\hat{h}\in\mathcal{H}_m(\partial\Omega)}\int_{\partial\Omega}\frac{|v-g|^p}{(1+\beta^{\frac{1}{p-1}}\hat{h})^{p-1}}d\mathcal{H}^{N-1}.
	\end{equation}
	In particular $h_v$ is the unique minimizer if $v-g$ is not identically null on $\partial\Omega$.
\end{proposition}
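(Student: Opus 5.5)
The plan is a pointwise convex-minimization argument in the variable $k:=\beta^{\frac1{p-1}}\hat h$, combined with a Lagrange multiplier for the mass constraint. Set $w:=|v-g|\ge 0$ on $\partial\Omega$. The problem \eqref{probminh} is then the minimization of $\Phi(\hat h)=\int_{\partial\Omega} w^p(1+\beta^{\frac1{p-1}}\hat h)^{1-p}\,d\mathcal H^{N-1}$ over the convex set $\mathcal H_m(\partial\Omega)$.

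First, I would dispose of the degenerate case $w\equiv0$. Then $\Phi\equiv0$, every $\hat h$ is a minimizer, and the condition \eqref{eq:c_definition} forces $c_v=0$. So $h_v$ is admissible provided it is read as some admissible profile; this matches the ``only if $v\neq g$'' uniqueness clause.

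Next comes the existence and uniqueness of $c_v$ when $w\not\equiv0$. Define $\Psi(c):=\int_{\partial\Omega}(w-c)^+\,d\mathcal H^{N-1} - c\,m\beta^{\frac1{p-1}}$ for $c\ge0$. This function is continuous and strictly decreasing, with $\Psi(0)=\int w>0$ and $\Psi(c)\to-\infty$. Hence it has a unique zero $c_v>0$. Rearranging $\Psi(c_v)=0$ gives exactly \eqref{eq:c_definition}, and it is equivalent to $\int h_v=m$ because $\beta^{\frac1{p-1}}h_v=(w/c_v-1)^+$. Conversely, any solution of \eqref{eq:c_definition} is a zero of $\Psi$, which gives uniqueness. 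Also $h_v\ge0$ and $h_v\le w/(c_v\beta^{\frac1{p-1}})\in L^p\subset L^1$, so $h_v\in\mathcal H_m(\partial\Omega)$.

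For minimality, I would use the pointwise convexity of $\phi_w(k)=w^p(1+k)^{1-p}$ on $[0,\infty)$. Its derivative is $\phi_w'(k)=-(p-1)w^p(1+k)^{-p}$. With $\lambda:=(p-1)c_v^p$, set $k_v=\beta^{\frac1{p-1}}h_v$. On $\{w>c_v\}$ we have $1+k_v=w/c_v$, so $\phi_w'(k_v)=-\lambda$. On $\{w\le c_v\}$ we have $k_v=0$ and $\phi_w'(0)=-(p-1)w^p\ge-\lambda$. These are KKT conditions. By convexity, for any $\hat h\in\mathcal H_m$ with $\hat k=\beta^{\frac1{p-1}}\hat h$,
\[
\phi_w(\hat k)\ge\phi_w(k_v)+\phi_w'(k_v)(\hat k-k_v)\ge \phi_w(k_v)-\lambda(\hat k-k_v).
\]
The second inequality uses $\hat k-k_v=\hat k\ge0$ where $k_v=0$. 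Integrating and using $\int\hat k=\int k_v=m\beta^{\frac1{p-1}}$ gives $\Phi(\hat h)\ge\Phi(h_v)$.

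For uniqueness, $\phi_w$ is strictly convex wherever $w>0$, so equality forces $\hat k=k_v$ a.e. on $\{w>0\}$. On $\{w=0\}$ we have $k_v=0$, and equality in the second inequality requires $\lambda\hat k=0$ there. Since $\lambda>0$ when $w\not\equiv0$, this gives $\hat k=0$. This is where $c_v>0$ is crucial. I expect this to be the main obstacle: handling the set $\{w=0\}$, and $\{w\le c_v\}$ in general, correctly so that the second inequality is justified pointwise and the equality case pins down $\hat h$ there. The rest is routine.
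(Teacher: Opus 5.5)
Your proposal is correct and follows essentially the paper's route. Your $\Psi$ is the paper's $\xi_1-\xi_2$. Your pointwise tangent-line/KKT inequality with multiplier $\lambda=(p-1)c_v^p$ is the integrand-level form of the paper's estimate $\psi'(t)\le -(p-1)\beta^{\frac{1}{p-1}}I$ with $I\ge 0$, taken along the segment $H_t=\hat h+t(h_v-\hat h)$.

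The one genuine difference is uniqueness. The paper uses midpoint strict convexity on $\{v\neq g\}$ plus an informal mass-transfer argument on $\{v=g\}$. You instead read uniqueness off the equality case of your own two inequalities: strict convexity where $w>0$, and the strict slack $\phi_w'(0)=0>-\lambda$ where $w=0$. This is cleaner and also avoids differentiating under the integral sign, so the obstacle you anticipated on $\{w=0\}$ is in fact already handled by your argument.

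Your remark that $h_v\equiv 0\notin\mathcal{H}_m(\partial\Omega)$ when $v\equiv g$ points to an imprecision in the statement itself, which the paper also passes over, not to a gap in your proof.
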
 
 \begin{proof}
 	The case with $v-g$ identically null is trivial, as the integrand is zero, and any $\hat{h} \in \mathcal{H}_m(\partial\Omega)$ is a minimizer. Then without loss of generality, we focus on the case where $v-g\not\equiv 0$.
 	
 	\medskip

 	First we show the existence of a unique $c_v>0$ satisfying \eqref{eq:c_definition} reasoning as in \cite[Lemma $4.1$]{DpNST}. 
 	To this aim we define the functions $\xi_1, \xi_2: [0, \infty) \to [0, \infty)$ by:
 	\begin{align*}
 		\xi_1(t) &:= \int_{\{|v-g| > t\}} (|v(\sigma)-g(\sigma)| - t) d\mathcal{H}^{N-1}, \\
 		\xi_2(t) &:= mt\beta^{\frac{1}{p-1}},
 	\end{align*}
 	and we aim to prove that there exists a unique value $c_v$ such that $\xi_1(c_v) = \xi_2(c_v)$ which is equivalent to showing \eqref{eq:c_definition}. 
 	First observe that $\xi_1$ is continuous and non-increasing while $\xi_2$ is continuous and strictly increasing, as $m>0$ and $\beta>0$.
 	
 	Moreover observe that $\xi_2(0)=0$ and $\xi_1(0) = \int_{\partial\Omega} |v-g| d\mathcal{H}^{N-1} > 0$ as $v-g$ is not null and we also have  $\lim_{t\to\infty} \xi_2(t) = \infty$.
 	This is sufficient to deduce a unique intersection point $c_v \in (0, \infty)$ such that $\xi_1(c_v) = \xi_2(c_v)$.
 	  	
  	\medskip
  	
  	Next we show that the nonnegative function $h_v$, defined in \eqref{defh},  belongs to $\mathcal{H}_m(\partial\Omega)$.
 	Indeed one has
 	\begin{equation}\label{hinm}
 	\begin{aligned}
 		\int_{\partial\Omega} h_v d\mathcal{H}^{N-1} &= \int_{\{|v-g| > c_v\}} \left( \frac{|v(\sigma)-g(\sigma)|}{c_v\beta^{\frac{1}{p-1}}} - \frac{1}{\beta^{\frac{1}{p-1}}} \right) d\mathcal{H}^{N-1} \\
 		&= \frac{1}{\beta^{\frac{1}{p-1}}} \left( \frac{1}{c_v} \int_{\{|v-g| > c_v\}} |v-g| d\mathcal{H}^{N-1} - |\{|v-g| > c_v\}|\right).
 	\end{aligned}
 	\end{equation}
 	Now observe that \eqref{eq:c_definition} means $ \int_{\{|v-g| > c_v\}} |v-g| d\mathcal{H}^{N-1} = c_v \left( |\{|v-g| > c_v\}| + m\beta^{\frac{1}{p-1}} \right)$; then from \eqref{hinm}, one gets
 	\[
 	\int_{\partial\Omega} h_v d\mathcal{H}^{N-1} = \frac{1}{\beta^{\frac{1}{p-1}}} \left( \frac{c_v \left( |\{|v-g| > c_v\}| + m\beta^{\frac{1}{p-1}} \right)}{c_v} - |\{|v-g| > c_v\}| \right) = m,
 	\]
 	namely $h_v \in \mathcal{H}_m(\partial\Omega)$.
 	
 	\medskip
 	
 Now we are left to show the optimality of $h_v$ for problem \eqref{probminh}, namely that
 	\[ \int_{\partial\Omega}\frac{|v-g|^p}{(1+\beta^{\frac{1}{p-1}}h_v)^{p-1}}d\mathcal{H}^{N-1}=\min_{\hat{h}\in\mathcal{H}_m(\partial\Omega)}\int_{\partial\Omega}\frac{|v-g|^p}{(1+\beta^{\frac{1}{p-1}}\hat{h})^{p-1}}d\mathcal{H}^{N-1}.
 	\]	
  	  	 	 	 	 
 	Then let $\hat{h} \in \mathcal{H}_m(\partial\Omega)$ and let us set $H_t = \hat{h} + t(h_v - \hat{h})$ for $t \in [0,1]$. In particular let us define the following function
 	\[
 	\psi(t) := \int_{\partial\Omega} \frac{|v-g|^p}{\left(1+\beta^{\frac{1}{p-1}}H_t\right)^{p-1}} d\mathcal{H}^{N-1}.
 	\]
 	To show that $h_v$ is a minimizer, it is sufficient to prove that $\psi'(t) \le 0$ for $t \in (0,1)$ so that the minimum would be attained at $\psi(1)$. 
 	Then, one has
 	\[
 	\psi'(t) = -(p-1)\beta^{\frac{1}{p-1}} \int_{\partial\Omega} \frac{|v-g|^p(h_v-\hat{h})}{\left(1+\beta^{\frac{1}{p-1}}H_t\right)^{p}} d\mathcal{H}^{N-1}.
 	\]
	In order to prove that the previous expression is nonpositive, let us set $A := \{\sigma \in \partial\Omega : h_v(\sigma) > \hat{h}(\sigma)\}$ and $B := \{\sigma \in \partial\Omega : h_v(\sigma) < \hat{h}(\sigma)\}$.

 Now, as on $A$, $h_v > \hat{h}$, this implies that for any $t \in (0,1)$, it holds $H_t < h_v$; then, on $A$, it follows that 
 \begin{equation}\label{ineh}
 -(p-1)\beta^{\frac{1}{p-1}} \frac{|v-g|^p(h_v-\hat{h})}{\left(1+\beta^{\frac{1}{p-1}}
 H_t\right)^{p}} < -(p-1)\beta^{\frac{1}{p-1}} \frac{|v-g|^p(h_v-\hat{h})}{\left(1+\beta^{\frac{1}{p-1}}h_v\right)^{p}}.	
 \end{equation}
 On $B$, it holds $h_v< \hat{h}$ so that $H_t > h_v$; then a similar reasoning to the previous leads to the same inequality for the integrand given by \eqref{ineh} as, in this case, $h_v - \hat{h}$ is nonpositive.
 
 Then, as if $h_v= \hat{h}$ then the integrand is null, in any case we have shown that 
 \[
 \psi'(t) \le -(p-1)\beta^{\frac{1}{p-1}} \int_{\partial\Omega} \frac{|v-g|^p(h_v-\hat{h})}{\left(1+\beta^{\frac{1}{p-1}}h_v\right)^{p}} d\mathcal{H}^{N-1},
 \]
 and now we aim to show that 
 \[
 I := \int_{\partial\Omega} \frac{|v-g|^p(h_v-\hat{h})}{\left(1+\beta^{\frac{1}{p-1}}h_v\right)^{p}} d\mathcal{H}^{N-1} \ge 0.
 \]
Then, recalling \eqref{defh}, one has
 \begin{align*}
 	I &= \int_{\{|v-g| > c_v\}} c_v^p(h_v - \hat{h}) d\mathcal{H}^{N-1} - \int_{\{|v-g| \le c_v\}} |v-g|^p\hat{h} d\mathcal{H}^{N-1} \\
 	&\ge \int_{\{|v-g| > c_v\}} c_v^p(h_v - \hat{h}) d\mathcal{H}^{N-1} - \int_{\{|v-g| \le c_v\}} c_v^p \hat{h} d\mathcal{H}^{N-1} \\
 	&= c_v^p \left( \int_{\partial\Omega} h_v d\mathcal{H}^{N-1} - \int_{\partial\Omega} \hat{h} d\mathcal{H}^{N-1} \right)= 0
 \end{align*}
 since both $h_v, \hat{h} \in \mathcal{H}_m(\partial\Omega)$.
 
 Therefore, we have proved that for $t\in (0,1)$ it holds
 $\psi'(t) \le 0$ and, as already discussed, this gives the optimality of $h_v$ which is achieved for $t=1$.

 Finally, we show that $h_v$ is the unique minimizer provided $v-g$ is not identically null on $\partial \Omega$. Let $h_1, h_2 \in \mathcal{H}_m(\partial\Omega)$ be two minimizers. Since the map $t \mapsto (1+\beta^{\frac{1}{p-1}}t)^{-(p-1)}$ is strictly convex for $t \ge 0$, it holds
 \[
 \frac{1}{2} \frac{|v-g|^p}{(1+\beta^{\frac{1}{p-1}}h_1)^{p-1}} + \frac{1}{2} \frac{|v-g|^p}{(1+\beta^{\frac{1}{p-1}}h_2)^{p-1}} \ge \frac{|v-g|^p}{\left(1+\beta^{\frac{1}{p-1}}\frac{h_1+h_2}{2}\right)^{p-1}},
 \]
 where the inequality is strict almost everywhere on the set $\{v \neq g\} \cap \{h_1 \neq h_2\}$. Since $\frac{h_1+h_2}{2} \in \mathcal{H}_m(\partial\Omega)$ and both $h_1$ and $h_2$ are minimizers, the integrals over $\partial \Omega$ of both sides must be equal, which forces $h_1 = h_2$ almost everywhere on $\{v \neq g\}$. 
 On the other hand, on the set $\{v = g\}$, any optimal configuration must vanish almost everywhere. On the contrary, if an optimal profile had positive mass on $\{v=g\}$, one could strictly decrease the integrand by moving this mass to the set $\{v \neq g\}$ (which has positive measure by assumption). Therefore, $h_1 = h_2 = 0$ on $\{v = g\}$. This implies $h_1 = h_2$ almost everywhere on $\partial \Omega$, proving uniqueness.
 \end{proof}
 
 Next result concerns convexity properties of the functional $J$ defined in \eqref{defJ}.
 
 \begin{proposition}\label{prop_convexity}
Let $\Omega$ be connected and let $p>1$, $\beta>0$, and $m>0$ be fixed. Finally let $f\in L^{\frac{p}{p-1}}(\Omega)$ and let $g\in L^p(\partial\Omega)$.
Then the functional $J(v,h)$ defined in \eqref{defJ} is convex in $W^{1,p}(\Omega) \times  \mathcal{H}_m(\partial\Omega)$ that is 
\[
\frac{1}{2}\left[ J(v_1, h_1) + J(v_2, h_2) \right] \ge J\left(\frac{v_1+v_2}{2}, \frac{h_1+h_2}{2}\right),
\]
where the equality holds if and only if there exists a constant $k \in \mathbb{R}$ such that $v_2 = v_1 + k$ almost everywhere in $\Omega$, and on $\partial\Omega$ it holds:
\begin{equation}\label{cond_conv}
\frac{v_1-g}{1+ \beta^{\frac{1}{p-1}}h_1} = \frac{v_1+k-g}{1+\beta^{\frac{1}{p-1}}h_2}.
\end{equation}
\end{proposition}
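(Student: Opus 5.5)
The plan is to split $J$ into its three pieces and treat each separately: the gradient term $\frac1p\int_\Omega|\nabla v|^p$, the linear term $-\int_\Omega fv$, and the boundary term $\frac{\beta}{p}\int_{\partial\Omega}\Phi(v-g,h)\,d\mathcal H^{N-1}$, where
\[
\Phi(a,t):=\frac{|a|^p}{(1+\beta^{\frac1{p-1}}t)^{p-1}},\qquad (a,t)\in\mathbb R\times[0,\infty).
\]
The linear term is affine, so midpoint convexity of $J$ follows from convexity of $\xi\mapsto|\xi|^p$ and joint convexity of $\Phi$. Joint convexity of $\Phi$ comes from a perspective-function argument. Set $s=1+\beta^{\frac1{p-1}}t>0$. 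Then $\Phi=s\,|a/s|^p=s\,\phi(a/s)$ with $\phi(x)=|x|^p$, which is the perspective of the convex function $\phi$ and is therefore jointly convex in $(a,s)$. Since $t\mapsto s$ is affine, $\Phi$ is jointly convex in $(a,t)$. I would write out the elementary inequality explicitly. With $s_\lambda=\frac{s_1+s_2}{2}$ and weights $\mu_i=\frac{s_i}{2s_\lambda}$, one has $\frac{a_1+a_2}{2s_\lambda}=\mu_1\frac{a_1}{s_1}+\mu_2\frac{a_2}{s_2}$, and convexity of $\phi$ gives the claim.

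For the equality case, suppose equality holds in the midpoint inequality. Since each of the three pieces satisfies its own inequality (with equality for the affine term), equality must hold pointwise a.e. both in $\Omega$ for $|\nabla\cdot|^p$ and on $\partial\Omega$ for $\Phi$. In $\Omega$, strict convexity of $|\xi|^p$ for $p>1$ gives $\nabla v_1=\nabla v_2$ a.e. Because $\Omega$ is connected, $v_2-v_1=k$ is constant a.e. Taking traces, $v_2=v_1+k$ on $\partial\Omega$ as well. On $\partial\Omega$, equality in the perspective inequality is equivalent to equality in the convexity of $\phi$ at the points $\frac{a_1}{s_1},\frac{a_2}{s_2}$ with positive weights $\mu_1,\mu_2$. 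Strict convexity of $|x|^p$ then forces $\frac{a_1}{s_1}=\frac{a_2}{s_2}$, which is exactly \eqref{cond_conv} with $a_1=v_1-g$ and $a_2=v_1+k-g$. The converse is a direct check. If $v_2=v_1+k$, the gradients coincide, so the gradient term gives equality. If moreover the ratios $a_i/s_i$ coincide, then $\Phi(a_i,t_i)=s_i|r|^p$ with a common ratio $r$, which is affine in $s$, so equality also holds for the boundary term.

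The main obstacle is the careful handling of equality in the perspective step. One must check that $s_i\ge1>0$ always, so that the weights $\mu_i$ are strictly positive and strict convexity applies without degenerate cases. One must also check that the boundary equality holds for a.e. $\sigma$. This follows because a nonnegative integrand (the convexity gap) with zero integral vanishes a.e. Finally, the hypothesis $h_i\ge0$ from $\mathcal H_m(\partial\Omega)$, together with the fact that the midpoint $\frac{h_1+h_2}{2}$ stays in $\mathcal H_m(\partial\Omega)$, keeps everything inside the admissible class.
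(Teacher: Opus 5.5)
Your proposal is correct and follows essentially the same route as the paper: strict convexity of $|\xi|^p$ together with connectedness of $\Omega$ for the gradient term, and the perspective-function inequality with its equality case for the boundary term (the paper's Lemma \ref{lemmaconvex}, applied with $y=1+\beta^{\frac{1}{p-1}}h$). Your extra remarks — positivity of the weights since $s_i\ge 1$, the a.e.\ vanishing of the nonnegative convexity gap, and the explicit check of the converse — only make explicit what the paper leaves implicit.
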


\begin{proof}
Let us analyze the convexity of each term of the functional $J$.

First observe that, as the function $x \mapsto |x|^p$ is strictly convex in $\mathbb{R}^N$ for $p>1$, for the first term of the functional $J$ it holds
\begin{equation}\label{convgrad}
	\frac{1}{2}\left(\frac{1}{p}\int_{\Omega}|\nabla v_1|^{p}dx + \frac{1}{p}\int_{\Omega}|\nabla v_2|^{p}dx\right) \ge \frac{1}{p}\int_{\Omega}\left|\frac{\nabla v_1+\nabla v_2}{2}\right|^{p}dx,
\end{equation}
where equality holds if and only if $\nabla v_1 = \nabla v_2$ almost everywhere in $\Omega$; as $\Omega$ is connected, this implies $v_2 = v_1 + k$ almost everywhere in $\Omega$ where $k \in \mathbb{R}$.

Now we focus on the boundary term. Let us set $\psi(v,h) = \frac{|v-g|^p}{(1+\beta^{\frac{1}{p-1}}h)^{p-1}}$ and $y = 1+ \beta^{\frac{1}{p-1}}h$. Then by denoting
\[
\psi(v,h) = \frac{|v-g|^p}{(1+\beta^{\frac{1}{p-1}}h)^{p-1}} = \frac{|v-g|^p}{y^{p-1}} = y\, \varphi\left(\frac{v-g}{y}\right),
\]
where $\varphi(t) = |t|^p$ and, as $\varphi$ is strictly convex for $p>1$, we can apply Lemma \ref{lemmaconvex} yielding to
\begin{equation}
	\label{gco}
	\frac{1}{2}\psi(v_1,h_1) + \frac{1}{2}\psi(v_2,h_2) \ge \psi\left(\frac{v_1+v_2}{2}, \frac{h_1+h_2}{2}\right).
\end{equation} 
The equality sign in \eqref{gco} is achieved if and only if the points $(v_1-g, y_1)$ and $(v_2-g, y_2)$ are collinear with the origin. This means that they must have the same ratio, which is equivalent to
\[ 
\frac{v_1-g}{1+\beta^{\frac{1}{p-1}}h_1} = \frac{v_2-g}{1+\beta^{\frac{1}{p-1}}h_2} \ \text{on }\partial\Omega.
\] 
By integrating over $\partial\Omega$ we have then proved that the boundary term is convex, namely 
\begin{equation}\label{convbordo}
	\begin{aligned}
	&\frac{1}{2}\int_{\partial\Omega}\frac{|v_1-g|^p}{(1+\beta^{\frac{1}{p-1}}h_1)^{p-1}}d\mathcal{H}^{N-1} + \frac{1}{2}\int_{\partial\Omega}\frac{|v_2-g|^p}{(1+\beta^{\frac{1}{p-1}}h_2)^{p-1}}d\mathcal{H}^{N-1} \\
	&\ge \frac{1}{2}\int_{\partial\Omega}\frac{|\frac{v_1+v_2}{2}-g|^p}{(1+\beta^{\frac{1}{p-1}}\frac{h_1+h_2}{2})^{p-1}}d\mathcal{H}^{N-1}.
	\end{aligned}
\end{equation}
By gathering together \eqref{convbordo}, \eqref{convgrad} and the fact that the term involving $f$ is linear in $v$, one yields to
  \begin{equation*}
    \label{tesipass}
\frac{1}{2}\left[ J(v_1, h_1) + J(v_2, h_2) \right] \ge J\left(\frac{v_1+v_2}{2}, \frac{h_1+h_2}{2}\right)
\end{equation*}
with equality sign which must hold simultaneously in the gradient term, which requires $v_2 = v_1 + k$, and in the boundary term, which means $(v_1-g)(1+ \beta^{\frac{1}{p-1}}h_2) = (v_1+k-g)(1+\beta^{\frac{1}{p-1}}h_1)$ for some $k\in \mathbb{R}$. 

\end{proof}

Next result concerns the coercivity of the functional $J$ in $W^{1,p}(\Omega)$.
\begin{proposition}\label{prop_coerc}
Let $p>1$, $\beta > 0$, and $m > 0$ be fixed. Finally let $f\in L^{\frac{p}{p-1}}(\Omega)$ and let $g\in L^p(\partial\Omega)$. Then there exist two positive constants $C_1$ and $C_2$ such that for any $(v,h) \in W^{1,p}(\Omega)\times\mathcal{H}_{m}(\partial\Omega)$, it holds
\[
J(v,h) \ge C_1 \|v\|_{W^{1,p}(\Omega)}^p - C_2.
\]
\end{proposition}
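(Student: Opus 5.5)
The plan is to bound the boundary term from below uniformly in $h\in\mathcal{H}_m(\partial\Omega)$, which reduces the estimate to a Poincaré–Friedrichs type inequality with a boundary term. The difficulty is that $h$ may be arbitrarily large on small sets, so the weight $(1+\beta^{\frac{1}{p-1}}h)^{-(p-1)}$ has no pointwise positive lower bound. I will use the mass constraint through a Chebyshev argument. Set $E:=\{\sigma\in\partial\Omega: h(\sigma)\le 2m/|\partial\Omega|\}$. Since $\int_{\partial\Omega}h=m$, we get $|\partial\Omega\setminus E|\le |\partial\Omega|/2$, hence $|E|\ge |\partial\Omega|/2$. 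On $E$ the weight is at least $c_0:=(1+2\beta^{\frac{1}{p-1}}m/|\partial\Omega|)^{-(p-1)}$, so
\[
J(v,h)\ge \frac1p\int_\Omega|\nabla v|^p\,dx+\frac{\beta c_0}{p}\int_E|v-g|^p\,d\mathcal{H}^{N-1}-\int_\Omega fv\,dx .
\]
If $\Omega$ is not connected, I would apply the argument componentwise, using $\partial\Omega_i$ and $m$ in place of $\partial\Omega$ and the mass on each component. This works because each component carries mass at most $m$.

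The key step is a uniform Poincaré inequality: there is $C>0$ such that for every measurable $E\subset\partial\Omega$ with $|E|\ge|\partial\Omega|/2$ (on a connected component) and every $w\in W^{1,p}(\Omega)$,
\[
\|w\|_{L^p(\Omega)}^p\le C\Big(\|\nabla w\|_{L^p(\Omega)}^p+\int_E|w|^p\,d\mathcal{H}^{N-1}\Big).
\]
I would prove this by contradiction, which is the standard compactness argument. Suppose there are normalized $w_n$ with $\|w_n\|_{L^p}=1$ and sets $E_n$ for which the right-hand side tends to $0$. Then $w_n$ is bounded in $W^{1,p}$, and up to a subsequence $w_n\to w$ strongly in $L^p(\Omega)$ and in $L^p(\partial\Omega)$, by compactness of the trace. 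Since $\nabla w=0$, the limit $w$ is a constant $k$ with $|k|^p|\Omega|=1$. Passing to a further subsequence, $\chi_{E_n}\rightharpoonup\theta$ weakly-* in $L^\infty(\partial\Omega)$, with $\int\theta\ge|\partial\Omega|/2$. Then $\int_{E_n}|w_n|^p\to |k|^p\int\theta>0$, which is a contradiction. Handling the varying sets $E_n$ is the only delicate point, and this weak-* argument settles it.

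Apply the inequality to $w=v$ and use $|v|^p\le 2^{p-1}(|v-g|^p+|g|^p)$ on $E$. This gives
\[
\|v\|_{L^p(\Omega)}^p\le C'\Big(\|\nabla v\|_p^p+\int_E|v-g|^p\Big)+C'\|g\|_{L^p(\partial\Omega)}^p .
\]
Consequently
\[
\frac1p\|\nabla v\|_p^p+\frac{\beta c_0}{p}\int_E|v-g|^p\ge c_1\|v\|_{W^{1,p}}^p-C_3 .
\]
Finally, Young's inequality gives $\int_\Omega fv\le \frac{c_1}{2}\|v\|_{L^p}^p+C(c_1)\|f\|_{p'}^{p'}$, and this completes the estimate with $C_1=c_1/2$. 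The constants depend only on $p,\beta,m,\Omega,\|f\|_{p'},\|g\|_{L^p(\partial\Omega)}$, as required.
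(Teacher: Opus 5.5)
Your proof is correct, but it follows a different route from the paper's. The paper never truncates $h$. It applies H\"older's inequality with exponents $p$ and $\frac{p}{p-1}$ to the splitting $|v-g|=\frac{|v-g|}{(1+\beta^{\frac{1}{p-1}}h)^{\frac{p-1}{p}}}\,(1+\beta^{\frac{1}{p-1}}h)^{\frac{p-1}{p}}$. This way the mass constraint enters exactly through $\int_{\partial\Omega}(1+\beta^{\frac{1}{p-1}}h)\,d\mathcal{H}^{N-1}=P(\Omega)+m\beta^{\frac{1}{p-1}}$, which gives the $h$-independent bound
\[
\int_{\partial\Omega}\frac{|v-g|^p}{(1+\beta^{\frac{1}{p-1}}h)^{p-1}}\,d\mathcal{H}^{N-1}\ge \frac{\left(\int_{\partial\Omega}|v-g|\,d\mathcal{H}^{N-1}\right)^p}{\left(P(\Omega)+m\beta^{\frac{1}{p-1}}\right)^{p-1}}.
\]
After the triangle inequality, the paper only needs a standard Poincar\'e inequality with the fixed boundary term $\left(\int_{\partial\Omega}|v|\,d\mathcal{H}^{N-1}\right)^p$, cited from Ziemer. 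It then absorbs the $f$-term with H\"older/Young exactly as you do. Your Chebyshev truncation instead controls $v-g$ in $L^p$ only on an $h$-dependent set $E$. That is why you need a Poincar\'e inequality that is uniform over all $E$ with $|E|\ge|\partial\Omega|/2$, proved by compactness with a non-explicit constant. It is also why you need a separate componentwise argument when $\Omega$ is disconnected. The paper's bound already involves all of $\partial\Omega$, so every component is controlled and no case split is needed. The paper's route is shorter and keeps the boundary estimate explicit in $m$, $\beta$ and $P(\Omega)$. Its H\"older step is, however, tailored to the power structure of the weight: the exponents are chosen precisely so that the quantity fixed by the mass constraint appears. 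Your argument uses only that the weight is a positive nonincreasing function of $h$ together with $\int_{\partial\Omega}h=m$. So it would also cover weights decaying faster than any power of $h$, where the H\"older trick is unavailable. One small simplification in your compactness step: the weak-$*$ limit $\theta$ is unnecessary. Since $\bigl|\int_{E_n}(|w_n|^p-|k|^p)\,d\mathcal{H}^{N-1}\bigr|\le\bigl\||w_n|^p-|k|^p\bigr\|_{L^1(\partial\Omega)}\to0$ and $|E_n|\ge|\partial\Omega|/2$, you directly get $\liminf_n\int_{E_n}|w_n|^p\,d\mathcal{H}^{N-1}\ge|k|^p|\partial\Omega|/2>0$.
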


\begin{proof}
	Let us start by estimating from below the boundary term.
	It follows from an application of the H\"older inequality with exponents $p$ and $\frac{p}{p-1}$ that it holds
	\[ 
	\int_{\partial\Omega} |v-g| \,d\mathcal{H}^{N-1}  \le \left( \int_{\partial\Omega} \frac{|v-g|^p}{(1+\beta^{\frac{1}{p-1}}h)^{p-1}} \,d\mathcal{H}^{N-1} \right)^{\frac{1}{p}} \left( \int_{\partial\Omega} \left(1+\beta^{\frac{1}{p-1}}h\right) \,d\mathcal{H}^{N-1} \right)^{\frac{p-1}{p}}.
	\]
	Since for the second term on the right-hand of the previous, it holds
	\[
	\int_{\partial\Omega} \left(1+\beta^{\frac{1}{p-1}}h\right) \,d\mathcal{H}^{N-1}  = P(\Omega) + m\beta^{\frac{1}{p-1}},
	\]
	one clearly has that
	\begin{equation}\label{coerc_bound}
		\int_{\partial\Omega}\frac{|v-g|^{p}}{(1+\beta^{\frac{1}{p-1}}h)^{p-1}}d\mathcal{H}^{N-1} \ge \frac{\left(\int_{\partial\Omega}|v-g|\,d\mathcal{H}^{N-1}\right)^p}{\left(P(\Omega) + m\beta^{\frac{1}{p-1}}\right)^{p-1}}.
	\end{equation}
Moreover, using the triangle inequality $\int_{\partial\Omega} |v| d\mathcal{H}^{N-1} \le \int_{\partial\Omega} |v-g| d\mathcal{H}^{N-1}  + \int_{\partial\Omega} |g| d\mathcal{H}^{N-1} $ together with the convexity inequality $(A+B)^p \le 2^{p-1}(A^p + B^p)$ for any $A, B \ge 0$, it follows from \eqref{coerc_bound} that
	\[
	J(v,h) \ge \frac{1}{p}\int_{\Omega}|\nabla v|^{p}dx + \frac{\beta\left(\int_{\partial\Omega}|v|\,d\mathcal{H}^{N-1}\right)^p}{p 2^{p-1}\left(P(\Omega) + m\beta^{\frac{1}{p-1}}\right)^{p-1}} - C(g) - \int_{\Omega}fv dx,
	\]
	where $C(g)$ is a positive constant depending only on $g$, $\beta$, $m$, $p$, and $\Omega$.
	Then one can apply Theorem 4.4.6 of \cite{ziemer} to obtain that there exists a positive constant $C_0$ such that
	\[
	J(v,h) \ge C_0 \|v\|_{W^{1,p}(\Omega)}^p - C(g) - \int_{\Omega}fv dx.
	\]
	Now observe that for the term involving $f$, one can apply the H\"older, the Young and the Sobolev inequalities in order to obtain that for any $\varepsilon>0$
	\[
	J(v,h) \ge C_0 \|v\|_{W^{1,p}(\Omega)}^p - \frac{\varepsilon^p}{p}\|v\|_{W^{1,p}(\Omega)}^p - \frac{p-1}{p\varepsilon^{\frac{p}{p-1}}}C_S\|f\|_{L^{\frac{p}{p-1}}(\Omega)}^{\frac{p}{p-1}} - C(g),
	\]
	where $C_S$ is a constant depending on the Sobolev embedding. By choosing $\varepsilon>0$ small enough, we can absorb the norm of $v$ and conclude the proof. 
\end{proof}

We are now in position to prove Theorem \ref{teo_min}.

\begin{proof}[Proof of Theorem \ref{teo_min}]
	First observe that it follows from Proposition \ref{prop_bordo} that it is sufficient to minimize the functional $J(v, h_v)$ over $v \in W^{1,p}(\Omega)$; in particular, with an abuse of notation, we simply refer to it by $J(v)$.
	
	\medskip
	
	Proposition \ref{prop_coerc} gives that $J$ is bounded from below and coercive, so that we can assume that the existence of a minimizing sequence $u_n$ for $J(v)$ which is bounded in $W^{1,p}(\Omega)$ with respect to $n$.

	Then, by standard theory, this provides the existence of $u \in W^{1,p}(\Omega)$ such that, up to a subsequence, $u_n \rightharpoonup u$ in $W^{1,p}(\Omega)$ and such that $u_n \to u$ in $L^p(\Omega)$. The trace theorem also ensures strong convergence of $u_n \to u$ in $L^p(\partial\Omega)$.
	
	\medskip
	
	Now our aim is to prove that the functional $J$ is weakly lower semicontinuous, i.e., $\liminf_{n\to\infty} J(u_n) \ge J(u)$ whenever $u_n \rightharpoonup u$ in $W^{1,p}(\Omega)$. To this end, we will only focus on the boundary integral, as the weak lower semicontinuity of the remaining terms is standard.
	First observe that, as $u_n \to u$ in $L^p(\partial\Omega)$, it is sufficient to show that $c_{u_n} \to c_u$ so that $h_{u_n} \to h_u$ $\mathcal{H}^{N-1}-$ a.e. on $\partial\Omega$ and this would allow to pass to the limit with respect to $n$ in the boundary term.

First observe that $c_{u_n}$ is bounded; otherwise it would lead to a contradiction with the fact that the sequence of norms $\|u_n\|_{L^1(\partial\Omega)}$ is itself bounded and  $g\in L^p(\partial\Omega)$. Then there exists a convergent subsequence converging to some $\overline{c}$ which can be shown to be equal to $c_u$ by simply passing to the limit into \eqref{eq:c_definition} with respect to $n$.  Indeed, thanks to the strong convergence of $u_{n}$ in $L^p(\partial\Omega)$ and applying the Dominated Convergence Theorem, one can deduce, by uniqueness, that $\overline{c}=c_u$ by passing to the limit in the definition of $c_{u_n}$. 
This is actually sufficient to prove that $J$ is weak lower semicontinuous. This implies that $u$ is a minimizer for $J$ and the pair $(u, h_u)$ is a solution to \eqref{pbmin}. 
	
	\medskip
	
	It remains to prove the uniqueness of the minimizer. First observe that it is standard to show that any minimizer $u$ satisfies \eqref{eqpbmin} and that, under the assumption $\int_\Omega f \,dx \neq 0$, $c_u$ is strictly positive. Indeed, this follows by integrating \eqref{eqpbmin} over $\Omega$ implying that if $u \equiv g$ on $\partial\Omega$, then $\int_\Omega f \,dx$ would necessarily vanish. Consequently, by Proposition \ref{prop_bordo}, $c_u$ cannot be zero, which implies that any minimizer cannot be identically equal to $g$ on $\partial\Omega$.
	
	Now let us assume by contradiction that there exist two distinct global minimizers $(u_1, h_{u_1})$ and $(u_2, h_{u_2})$ for the functional $J$. 
	As Proposition \ref{prop_convexity} is in force, one gains that $(\bar{u}, \bar{h}) = \left(\frac{u_1+u_2}{2}, \frac{h_{u_1}+h_{u_2}}{2}\right)$ must also be a global minimizer. This implies that the equality case must hold in the convexity inequalities for each term of the functional and that \eqref{cond_conv} needs to hold, namely
	\begin{equation}\label{cond_convintotheorem}
		\frac{u_1-g}{1+ \beta^{\frac{1}{p-1}}h_{u_1}} = \frac{u_1+k-g}{1+\beta^{\frac{1}{p-1}}h_{u_2}} \quad \text{on } \partial\Omega,
	\end{equation}
	where $u_2=u_1+k$ in $\Omega$ for some $k\in\mathbb{R}$.
	
	Now let assume by contradiction that $k \neq 0$.
	
	Then let $\gamma = \beta^{\frac{1}{p-1}}$ and define $v_i = u_i - g$. In particular \eqref{cond_convintotheorem} reads as
	\begin{equation}\label{cond_convintotheorem2}
		\frac{v_1}{1+\gamma h_{u_1}} = \frac{v_2}{1+\gamma h_{u_2}} \quad \text{on } \partial\Omega.
	\end{equation}
	Let us observe that, since $1+\gamma h_{u_i} \ge 1 > 0$, one immediately deduces from \eqref{cond_convintotheorem2} that $v_1$ and $v_2$ must share the same sign almost everywhere.
	
	Moreover also recall that the optimal thickness gives that
	\begin{equation}\label{opthintoth}
		h_{u_i} = 0 \quad \text{if } |v_i| \le c_{u_i}, \qquad 1+\gamma h_{u_i} = \frac{|v_i|}{c_{u_i}} \quad \text{if } |v_i| > c_{u_i}.
	\end{equation}
	As we want to evaluate
	\[\int_{\partial\Omega} \frac{|v_2|^p}{(1+\gamma h_{u_2})^{p-1}}d \mathcal{H}^{N-1} - \int_{\partial\Omega} \frac{|v_1|^p}{(1+\gamma h_{u_1})^{p-1}}d \mathcal{H}^{N-1}\]
	we set $E_i = \frac{|v_i|^p}{(1+\gamma h_{u_i})^{p-1}}$ and we split the boundary $\partial\Omega$ into four disjoint subsets and apply \eqref{cond_convintotheorem} and \eqref{opthintoth}.
	Let us consider the following cases:
	
	\textbf{Case 1: on \{$|v_1| \le c_{u_1}$, $|v_2| \le c_{u_2}$\}.}
	Here $h_{u_1} = 0$ and $h_{u_2} = 0$. Condition \eqref{cond_convintotheorem2} simplifies to $v_1 = v_2$. Since $v_2 = v_1 + k$, this implies $k = 0$, contradicting our assumption. Thus, if $k\not =0$, this set must have zero measure.
	
	\textbf{Case 2: on \{$|v_1| \le c_{u_1}$, $|v_2| > c_{u_2}$\}.}
	Here $h_{u_1} = 0$ and $1+\gamma h_{u_2} = \frac{|v_2|}{c_{u_2}}$. Then condition \eqref{cond_convintotheorem2} yields
	\[v_1 = \frac{v_2}{|v_2|/c_{u_2}},\]
	which gives $v_1 = c_{u_2} \frac{v_2}{|v_2|}$ so that $|v_1| = c_{u_2}$. Since we are in the zone where $|v_1| \le c_{u_1}$, this implies $c_{u_2} \le c_{u_1}$.
	Moreover $E_1 = |v_1|^p = c_{u_2}^p$ and $E_2 = \frac{|v_2|^p}{(|v_2|/c_{u_2})^{p-1}} = c_{u_2}^p \frac{|v_2|}{c_{u_2}} = c_{u_2}^p(1+\gamma h_{u_2})$. Therefore one gets
	\[E_2 - E_1 = c_{u_2}^p(1+\gamma h_{u_2}) - c_{u_2}^p = c_{u_2}^p \gamma h_{u_2} =  c_{u_2}^p \gamma (h_{u_2} - h_{u_1}),\]
	as $h_{u_1} = 0$.
	
	\textbf{Case 3: on \{$|v_1| > c_{u_1}$, $|v_2| \le c_{u_2}$\}.}
	 By symmetry with Case 2, one has $|v_2| = c_{u_1}$. Since $|v_2| \le c_{u_2}$, we deduce $c_{u_1} \le c_{u_2}$. 
	Then $E_1 = c_{u_1}^p(1+\gamma h_{u_1})$ and $E_2 = c_{u_1}^p$. Their difference is:
	\[E_2 - E_1 = c_{u_1}^p - c_{u_1}^p(1+\gamma h_{u_1}) = -c_{u_1}^p \gamma h_{u_1} = c_{u_1}^p \gamma (h_{u_2} - h_{u_1}),\]
	as $h_{u_2} = 0$.
	
	\textbf{Case 4: on \{$|v_1| > c_{u_1}$, $|v_2| > c_{u_2}$\}.}
	Here $1+\gamma h_{u_1} = \frac{|v_1|}{c_{u_1}}$ and $1+\gamma h_{u_2} = \frac{|v_2|}{c_{u_2}}$ and condition \eqref{cond_convintotheorem2} gives
	\[\frac{v_1}{|v_1|/c_{u_1}} = \frac{v_2}{|v_2|/c_{u_2}},\]
	which simplifies to $c_{u_1} \frac{v_1}{|v_1|} = c_{u_2} \frac{v_2}{|v_2|}$. As $v_1$ and $v_2$ have the same sign, this implies $c_{u_1} = c_{u_2}$ and we denote this value as $c$. 
	Therefore $E_i = \frac{|v_i|^p}{(|v_i|/c)^{p-1}} = c^p \frac{|v_i|}{c} = c^p (1+\gamma h_{u_i})$. The difference is simply
	\[E_2 - E_1 = c^p \gamma (h_{u_2} - h_{u_1}).\]
	
	\medskip
	
	We are now ready to conclude the proof by exploiting the contradiction.
	
	First, observe that $h_{u_1}$ and $h_{u_2}$ cannot be identically zero on $\partial\Omega$ since they both have $L^1$ mass equal to $m > 0$. If $c_{u_1} < c_{u_2}$, Case 2 would lead to the contradiction $c_{u_2} \le c_{u_1}$, meaning Case 2 must have zero measure. This implies $h_{u_2}$ must be entirely supported on Case 4, forcing Case 4 to have positive measure. But Case 4 implies $c_{u_1} = c_{u_2}$, giving a contradiction. A symmetric argument rules out $c_{u_2} < c_{u_1}$. Therefore, we necessarily have $c_{u_1} = c_{u_2} = c$. 
	
	With $c_{u_1} = c_{u_2} = c$, all the possible cases (including Case 1) consistently yield that almost everywhere on $\partial\Omega$ it holds
	\[E_2 - E_1 = c^p \gamma \big( h_{u_2} - h_{u_1} \big).\]
	
	Since $(u_1, h_{u_1})$ and $(u_2, h_{u_2})$ are both global minimizers, it holds that
	\[J(u_2) - J(u_1) = 0.\]
	Noting that $\nabla u_2 = \nabla u_1$, substituting $u_2 - u_1 = k$, and integrating the relation for $E_2 - E_1$, we get
	\[\frac{\beta}{p} c^p \gamma \int_{\partial\Omega} (h_{u_2} - h_{u_1}) \, d\mathcal{H}^{N-1} - k \int_{\Omega} f \, dx = 0.\]
	Since both profiles have total mass $m$, we have $\int_{\partial\Omega} h_{u_1} d\mathcal{H}^{N-1} = \int_{\partial\Omega} h_{u_2} d\mathcal{H}^{N-1} = m$, and the boundary integral vanishes completely.  This implies
	\[-k \int_{\Omega} f \, dx = 0,\]
which forces $k = 0$ as we assumed $\int_\Omega f dx \neq 0$; in particular this contradicts our initial assumption that $k \neq 0$.
	
	Therefore, we must have $k=0$, yielding $u_1 = u_2$ in $\Omega$. This also directly implies $h_{u_1} = h_{u_2}$ on $\partial\Omega$, concluding the proof of uniqueness.
\end{proof}

We conclude the section with an estimate of the heat content in case the datum $f$ is equal to $1$ and $g$ is constant.  In particular we also show that, for a constant insulator, the ball maximizes the heat content. 
Let us stress that in the sequel $\omega_N$ denotes the measure of the unit ball in $\mathbb{R}^N$.	
 \begin{theorem}\label{teo_heatcontent}
Let $p>1$, $m>0$, $\beta>0$ and let $f \equiv 1$ and $g \equiv \overline{g}\in \mathbb{R}$. Let $(u,h)$ be a minimizer of $J$ defined in \eqref{defJ}. 
Then it holds
\[
	\int_{\Omega}u\,dx \le \overline{g}|\Omega| + \left(N\omega_{N}^{\frac{1}{N}}\right)^{-\frac{p}{p-1}} \left[ \frac{(p-1)N}{p+(p-1)N}|\Omega|^{1+\frac{p}{(p-1)N}} + |\Omega|^{\frac{p}{(p-1)N}}\left(\frac{P(\Omega)}{\beta^{\frac{1}{p-1}}}+m\right) \right].
\]
The equality in the estimate holds if $\Omega$ is a ball and the insulator $h$ is constant.
 \end{theorem}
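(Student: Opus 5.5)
Set $\gamma=\beta^{\frac1{p-1}}$, $p'=\frac p{p-1}$, $K=\frac{P(\Omega)}{\gamma}+m$, and $w=u-\overline g$. Since $g\equiv\overline g$ is constant, $w$ minimizes $J$ with $g=0$. It solves $-\Delta_p w=1$ in $\Omega$ together with the Robin condition of \eqref{eqpbmin}, with weight $\beta/(1+\gamma h)^{p-1}$. The claim then becomes a bound on $\int_\Omega w\,dx$. I would prove it by a Talenti-type level set argument adapted to Robin conditions, in the spirit of the Bossel--Daners and Alvino--Nitsch--Trombetti approach.

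\textbf{Step 1: balance of fluxes on level sets.} Since $\int_\Omega w\,dx\le\int_0^\infty \mu(t)\,dt$, it suffices to control $\mu(t)=|U_t|$ with $U_t=\{w>t\}$ for $t>0$. Testing the equation with $(w-t)^+$, and differentiating in $t$ if needed, gives for a.e. $t>0$
\[
\mu(t)=\int_{\partial U_t\cap\Omega}|\nabla w|^{p-1}\,d\mathcal H^{N-1}+\int_{\partial U_t\cap\partial\Omega}\frac{\beta w^{p-1}}{(1+\gamma h)^{p-1}}\,d\mathcal H^{N-1}.
\]
Only the boundary region where $w>t>0$ enters, so no sign information on $w$ is needed.

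\textbf{Step 2: isoperimetric inequality and H\"older.} Define $G=|\nabla w|$ on $\partial U_t\cap\Omega$ and $G=\gamma w/(1+\gamma h)$ on $\partial U_t\cap\partial\Omega$. Then $\int_{\partial U_t}G^{p-1}=\mu(t)$. Writing $1=G^{\frac{p-1}p}G^{-\frac{p-1}p}$ and combining the isoperimetric inequality with H\"older gives
\[
N\omega_N^{\frac1N}\mu^{\frac{N-1}N}\le P(U_t)\le \mu(t)^{\frac1p}\Big(-\mu'(t)+B(t)\Big)^{\frac{p-1}p},
\qquad B(t):=\int_{\partial U_t\cap\partial\Omega}\frac{1+\gamma h}{\gamma w}\,d\mathcal H^{N-1}.
\]
Here I used $\int_{\partial U_t\cap\Omega}|\nabla w|^{-1}\le-\mu'(t)$ by coarea. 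Raising to $p'$ and checking that the exponent $p'\frac{N-1}N-\frac1{p-1}$ equals $1-\frac{p'}N$, this becomes
\[
(N\omega_N^{1/N})^{p'}\mu^{1-\frac{p'}N}\le -\mu'+B(t).
\]

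\textbf{Step 3: integration.} Multiply by $\mu^{p'/N}$ and integrate over $(0,\infty)$. The term $-\mu'\mu^{p'/N}$ integrates to at most $\frac{|\Omega|^{1+p'/N}}{1+p'/N}$, and $\frac1{1+p'/N}=\frac{(p-1)N}{p+(p-1)N}$. In the term $\mu^{p'/N}B$, bound $\mu\le|\Omega|$ and use Fubini:
\[
\int_0^\infty B(t)\,dt=\int_{\partial\Omega\cap\{w>0\}}\frac{1+\gamma h}{\gamma w}\,w\,d\mathcal H^{N-1}\le\frac{P(\Omega)+\gamma m}{\gamma}=K.
\]
Combining these yields exactly the stated estimate for $\int_\Omega w\,dx$, and hence for $\int_\Omega u\,dx$. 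Note that the minimality in $h$ is not even needed; only $h\in\mathcal H_m(\partial\Omega)$ is used.

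\textbf{Step 4: equality case.} For a ball with constant $h=m/P(\Omega)$, the solution is radial and decreasing. In that case the level sets are concentric balls, $|\nabla w|$ is constant on each level set, and $\partial U_t\cap\partial\Omega$ is either empty or all of $\partial\Omega$. Also $w$ is constant and positive on $\partial\Omega$, so every inequality above becomes an equality. The only point needing care is the last Fubini bound, where $\mu\le|\Omega|$ must hold with equality exactly for $t<w|_{\partial\Omega}$.

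\textbf{Main obstacle.} I expect the main difficulty to be Step 1 together with the coarea bound on $-\mu'$. This requires the usual regularity and measure-theoretic care: critical levels, absolute continuity issues for $\mu$ (only $-\mu'\ge\int|\nabla w|^{-1}$ is needed), and the justification of the boundary flux identity. I would handle it as in the Talenti literature, working with the integrated form of the inequality.
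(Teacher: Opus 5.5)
Your proposal is correct and follows essentially the same route as the paper: the flux identity on level sets obtained from truncated test functions, the generalized H\"older inequality combined with the isoperimetric inequality and the coarea bound on $-\mu'$, and integration in $t$ using $\mu\le|\Omega|$ and Fubini on the boundary term. The only difference is cosmetic. The paper invokes the strong maximum principle to get $u>\overline g$ and integrates over $t>\overline g$, whereas you integrate over $t>0$ and use $\int_\Omega w\le\int_\Omega w^+$; this sidesteps the maximum principle, which is only needed implicitly in the ball equality case, where $w>0$ on $\partial\Omega$ follows directly from the radial computation.
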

 \begin{proof}
	First observe that, as a consequence of the strong maximum principle, one has $u> \overline{g}$ on $\overline{\Omega}$. For $t>\overline{g}$ and $\delta>0$ let us define
 	\[
 	\varphi(x) = \begin{cases} 0 & \text{if } u(x) \le t, 
 	\\ u(x)-t & \text{if } t < u(x) < t+\delta, 
 	\\ \delta & \text{if } u(x) \ge t+\delta, \end{cases}
 	\]
 	which we take as a test function in the weak formulation of \eqref{eqpbmin}; this yields to
 	\[
 	\int_{\{t<u<t+\delta\}} |\nabla u|^p dx + \beta\int_{\partial\Omega} \frac{(u-\overline{g})^{p-1}\varphi}{(1+\beta^{\frac{1}{p-1}}h)^{p-1}} \,d\mathcal{H}^{N-1} = \int_{\{u>t\}} \varphi \,dx.
 	\]
 	Then one can divide the entire expression by $\delta$ and take the limit as $\delta \to 0^+$. By using also the co-area formula, one gets that for almost every $t>\bar g$ it holds
 	\begin{equation}\label{eqmu}
 	\mu(t) = \int_{\{u=t\}} |\nabla u|^{p-1} d\mathcal{H}^{N-1} + \beta\int_{\partial\Omega \cap \{u>t\}} \frac{(u-\overline{g})^{p-1}}{\left(1+\beta^{\frac{1}{p-1}}h\right)^{p-1}} d\mathcal{H}^{N-1}
 	\end{equation}
 	where $\mu(t) := |\{x \in \Omega : u(x) > t\}|$.
 	
 	Now let $P(t) := P(\{u>t\})$. By coarea formula for Sobolev functions, for a.e. $t>\bar g$, $ P(t)= \int_{\{u=t\}} 1 \,d\mathcal{H}^{N-1}+ \int_{\partial \Omega \cap \{u>t\}} 1 \,d\mathcal{H}^{N-1}$. Also observe that here $u$ is identified with its precise representative (and with its trace on $\partial\Omega$); actually, this distinction is unnecessary, since standard regularity results yield $u\in C(\overline\Omega)$.
	 Applying the generalized H\"older inequality (see \cite[Proposition $A.1$]{DOS}), we obtain
 	\begin{equation*}
 		\begin{aligned}
 		P(t)^p \le  &\left( \int_{\{u=t\}} |\nabla u|^{p-1}d\mathcal{H}^{N-1} +\beta \int_{\partial\Omega \cap\{u>t\}} \frac{(u-\overline{g})^{p-1}}{(1+\beta^{\frac{1}{p-1}}h)^{p-1}}d\mathcal{H}^{N-1}\right) \\
 		&\times \left( \int_{\{u=t\}} \frac{1}{|\nabla u|}d\mathcal{H}^{N-1} + \int_{\partial\Omega \cap\{u>t\}} \frac{(1+\beta^{\frac{1}{p-1}}h)}{\beta^{\frac{1}{p-1}}(u-\overline{g})} d\mathcal{H}^{N-1} \right)^{p-1}.
 		\end{aligned} 
 	\end{equation*}
 	Now observe that it follows from \eqref{eqmu} and from an application of the co-area formula that
 	 \begin{equation*}
 		\begin{aligned}
 			P(t)^p \le  \mu(t) \left(- \mu'(t)  + \int_{\partial\Omega \cap\{u>t\}} \frac{(1+\beta^{\frac{1}{p-1}}h)}{\beta^{\frac{1}{p-1}}(u-\overline{g})}d\mathcal{H}^{N-1}  \right)^{p-1}.
 		\end{aligned} 
 	\end{equation*}
 	In particular, recalling that $P(t) \ge N\omega_N^{\frac{1}{N}} \mu(t)^{\frac{N-1}{N}}$, the previous inequality leads to 
 	 \[
 	 \mu(t) \le (N\omega_N^{\frac{1}{N}})^{-\frac{p}{p-1}} \mu(t)^{\frac{p}{(p-1)N}}\left(-\mu'(t) + \int_{\partial\Omega \cap\{u>t\}} \frac{(1+\beta^{\frac{1}{p-1}}h)}{\beta^{\frac{1}{p-1}}(u-\overline{g})}d\mathcal{H}^{N-1}\right).
 	 \]
 	Now, by integrating with respect to $t$ and using that $\mu(t)\le |\Omega|$, one gets
 	\[\int_\Omega (u-\overline{g}) \ dx \le (N\omega_N^{\frac{1}{N}})^{-\frac{p}{p-1}} \left(\frac{(p-1)N}{p+(p-1)N}|\Omega|^{\frac{p}{(p-1)N}+1} + |\Omega|^{\frac{p}{(p-1)N}}\left(\frac{P(\Omega)}{\beta^{\frac{1}{p-1}}}+m\right) \right).
 	\]
 	Equality holds when $\Omega$ is a ball and $h$ is constant, and it follows from the fact that $u$ is actually radial.
 	This concludes the proof.
 \end{proof}

\section{On a concentration breaking phenomenon}

In this section we deal with a concentration breaking phenomenon involving the best insulating configuration $h$, which is defined in \eqref{defh} and which we recall here for simplicity
\begin{equation}\label{h_conc}
	h_v(\sigma) := \begin{cases}
	\displaystyle \frac{|v(\sigma)-g(\sigma)|}{c_{v}\beta^{\frac{1}{p-1}}} - \frac{1}{\beta^{\frac{1}{p-1}}} & \text{if } |v(\sigma)-g(\sigma)| > c_v, \\
	0 & \text{otherwise,}
\end{cases}
\end{equation}
where $c_v \ge 0$ is the unique constant satisfying
\begin{equation*}
c_v = \left( \frac{1}{|\{ |v-g| > c_v \}| + m\beta^{\frac{1}{p-1}}} \right) \int_{\{ |v-g| > c_v \}} |v-g| \, d\mathcal{H}^{N-1}.
\end{equation*}
Let us recall that, from Theorem \ref{teo_min}, there exists  
a pair $(u,h_u) \in W^{1,p}(\Omega) \times \mathcal{H}_m(\partial\Omega)$ minimizing \eqref{pbmin} which is unique if $\Omega$ is connected and $\int_\Omega f dx \neq 0$. 
Also observe that the minimizing pair $(u,h_u)$ is also a solution to \eqref{eqpbmin}.

\medskip

In this section we establish the existence of a critical mass threshold $\overline{m}>0$ such that when the total mass of the insulating material $m$ is below this threshold, the optimal configuration does not cover the entire boundary. Instead, the material concentrates exclusively on the regions with the highest thermal stress to maximize efficiency, leaving parts of the boundary exposed ($h=0$). The following result is proved in the model case $f=1$ and for $g$ sufficiently regular. Let us finally stress that the hypotheses required in Theorem \ref{teo_breaking} are further developed in Corollaries \ref{cor_nonpalla} and \ref{cor_palla_noncostante} below. In particular, we have concentration breaking either if the domain is not a ball and the external temperature profile $g$ is constant, or the domain is a ball and $g$ is not constant.

\begin{theorem}\label{teo_breaking}
	Let $\Omega$ be a connected, smooth, bounded and open set of $\mathbb{R}^N$.  Moreover let $f \equiv 1$, $g \in C^0(\partial\Omega)$ and $m,\beta > 0$. Assume that either $\partial\Omega$ is connected or $g$ is constant. Let $u_0$ be the unique solution to the normalized problem
	\begin{equation} \label{eq:u0}
		\begin{cases}
			-\Delta_p u_0 = 1 & \text{in } \Omega, \\
			|\nabla u_0|^{p-2}\dfrac{\partial u_0}{\partial\nu} = -\dfrac{|\Omega|}{P(\Omega)} & \text{on } \partial\Omega, \\
			\displaystyle\int_{\partial\Omega} u_0 \, d\mathcal{H}^{N-1} = 0. & 
		\end{cases}
	\end{equation}
	Let $(u,h_u)$ be the unique solution to \eqref{pbmin} where $h_u$ is defined by \eqref{h_conc}.
	Assume that the function $\sigma \mapsto u_0(\sigma) - g(\sigma)$ is not constant on $\partial\Omega$.
	Then, there exists $\overline{m}>0$ such that:
	\begin{itemize}
		\item[i)] if $m < \overline{m}$ then $h_u$ vanishes on a subset of $\partial\Omega$ of positive $\mathcal H^{N-1}-$ measure;
				\item[ii)] if $m > \overline{m}$ then $h_u>0$ on $\partial\Omega$.
	\end{itemize}
	Moreover, if $m=\overline{m}$, 
 $\{h_{u}=0\}\neq\emptyset$ although it may fail to have positive
$\mathcal H^{N-1}$-measure. 
\end{theorem}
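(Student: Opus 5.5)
The plan is to reduce full coverage to a pure Neumann problem, which forces the temperature to be $u_0$ shifted by a constant. Set $\gamma=\beta^{\frac1{p-1}}$. Suppose $h_u>0$ a.e.\ on $\partial\Omega$. By \eqref{h_conc} this means $|u-g|>c_u$ a.e., and there $1+\gamma h_u=|u-g|/c_u$. The Robin condition in \eqref{eqpbmin} then becomes
\[
|\nabla u|^{p-2}\partial_\nu u=-\beta c_u^{p-1}\operatorname{sign}(u-g)\quad\text{on }\partial\Omega .
\]
The first task is to show that $u-g>0$ on $\partial\Omega$, and this is where the alternative hypothesis is used.
\begin{itemize}
\item If $g$ is constant, the strong maximum principle (as in the proof of Theorem \ref{teo_heatcontent}) gives $u>\overline g$.
\item If $\partial\Omega$ is connected, $u-g$ is continuous and never crosses the band $[-c_u,c_u]$, so its sign is constant. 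Integrating $-\Delta_p u=1$ shows the total outward flux equals $|\Omega|>0$, so the sign is positive.
\end{itemize}
Integrating once more gives $\beta c_u^{p-1}P(\Omega)=|\Omega|$. Hence $c_u=c_*:=(|\Omega|/(\beta P(\Omega)))^{\frac1{p-1}}$ is independent of $m$. Since $\Omega$ is connected, uniqueness for the Neumann problem up to constants gives $u=u_0+k$ for some $k\in\mathbb R$.

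Next, compute the mass. Using $\int_{\partial\Omega}u_0=0$ and $1+\gamma h_u=(u_0+k-g)/c_*$,
\[
m=\frac{1}{\gamma c_*}\int_{\partial\Omega}(u_0+k-g-c_*)\,d\mathcal H^{N-1},
\]
which is strictly increasing and affine in $k$. The constraint $u-g>c_*$ a.e.\ on $\partial\Omega$ is, by continuity, equivalent to $k\ge \bar k:=c_*+\max_{\partial\Omega}(g-u_0)$. This suggests defining
\[
\overline m:=\frac{1}{\gamma c_*}\int_{\partial\Omega}\Big(\max_{\partial\Omega}(g-u_0)-(g-u_0)\Big)\,d\mathcal H^{N-1}.
\]
This quantity is strictly positive exactly because $u_0-g$ is continuous and not constant.

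For part ii), let $m>\overline m$ and choose $k>\bar k$ solving the mass relation. Set $u=u_0+k$ and $h=(u-g)/(\gamma c_*)-1/\gamma>0$. One checks that $c_*$ satisfies \eqref{eq:c_definition} for this $u$, since the superlevel set is all of $\partial\Omega$. Hence $h=h_u$, and by Proposition \ref{prop_bordo} $h$ minimizes $J(u,\cdot)$. Also $u$ solves \eqref{eqpbmin} with this $h$, so by strict convexity $u$ minimizes $J(\cdot,h)$.

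The main obstacle is upgrading this separate minimality to joint minimality. I would argue through the reduced functional $v\mapsto J(v,h_v)=\min_h J(v,h)$. It is convex by Proposition \ref{prop_convexity}, as a partial minimization of a jointly convex functional. Its Gâteaux derivative at $u$ equals $\partial_vJ(u,h_u)$ by an envelope (Danskin) argument, and this vanishes. So $u$ minimizes the reduced functional, and uniqueness in Theorem \ref{teo_min} identifies $(u,h)$ with the optimal pair.

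For part i), argue by contradiction. If $m<\overline m$ and $\{h_u=0\}$ were $\mathcal H^{N-1}$-null, the argument of the first paragraph gives $u=u_0+k$ with $k\ge\bar k$. Monotonicity of the mass relation then gives $m\ge\overline m$, a contradiction.

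For $m=\overline m$, the same reasoning gives $u=u_0+\bar k$. The continuous function $u-g-c_*$ then attains the value $0$ at a maximum point of $g-u_0$, so $h_u$ vanishes there. If $\{h_u=0\}$ had positive measure at $m=\overline m$ instead, the claim holds trivially.
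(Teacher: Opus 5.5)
Your proof is correct and follows the paper's argument essentially step by step. You reduce full coverage to the Neumann problem, fix the sign of $u-g$ via connectedness of $\partial\Omega$ or the maximum principle, and obtain $c_u=(|\Omega|/(\beta P(\Omega)))^{1/(p-1)}$ and $u=u_0+K_m$; your integral formula for $\overline m$ equals the paper's $\delta P(\Omega)^{p/(p-1)}/|\Omega|^{1/(p-1)}$, and you treat $m=\overline m$ by the same contradiction. Your reduced-functional (Danskin/envelope) argument for joint minimality in part ii) actually supplies a justification the paper only asserts when it says that a solution of \eqref{eqpbmin} is therefore a minimizer of $J$.
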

\begin{proof}
	Let us first prove part \textit{i)}. We assume by contradiction that the optimal distribution $h_{u}(\sigma) > 0$ for almost every $\sigma \in \partial\Omega$; this clearly means $|u(\sigma)-g(\sigma)| > c_u$ almost everywhere on $\partial\Omega$. In particular, recalling \eqref{h_conc}, $u$ satisfies the following Neumann problem
		\begin{equation}\label{cv_conc-1}
		\begin{cases}
			-\Delta_p u = 1 & \text{in } \Omega, \\
			|\nabla u|^{p-2}\dfrac{\partial u}{\partial\nu} = -\beta c_u^{p-1} \operatorname{sign}(u-g) & \text{on } \partial\Omega.
		\end{cases}
	\end{equation}

Then, since the right-hand side of the boundary condition is in $L^\infty(\partial\Omega)$, standard regularity results for quasilinear Neumann problems ensure that $u \in C(\overline{\Omega})$. Now observe that both $u$ and $g$ are continuous on $\partial\Omega$. Since we assumed $|u-g| > c_u$ almost everywhere, we want to show that $u(\sigma)-g(\sigma) > c_u$ for almost every $\sigma \in \partial\Omega$.
	First, if $\partial\Omega$ is connected, the continuous function $u-g$ cannot change sign. Thus, it holds either $u(\sigma)-g(\sigma) > c_u$ almost everywhere on $\partial\Omega$, or $u(\sigma)-g(\sigma) < -c_u$ almost everywhere on $\partial\Omega$. In particular, an integration by parts on \eqref{cv_conc-1} shows that $u(\sigma)-g(\sigma) < -c_u$ almost everywhere on $\partial\Omega$ cannot be the case. 
	On the other hand, if $g$ is constant, the strong maximum principle ensures that the solution $u$ to \eqref{cv_conc-1} cannot attain its global minimum on any boundary component where the outer normal derivative is strictly positive. This a priori excludes that $u-g < -c_u$ on any component of $\partial\Omega$. 
	Therefore, in either case, one has $u(\sigma)-g(\sigma) > c_u$ for almost every $\sigma \in \partial\Omega$ and the previous problem can be read as 
	\begin{equation*}
	\begin{cases}
		-\Delta_p u = 1 & \text{in } \Omega, \\
		|\nabla u|^{p-2}\dfrac{\partial u}{\partial\nu} =-\dfrac{|\Omega|}{P(\Omega)} \quad &\text{on } \partial\Omega,
	\end{cases}
	\end{equation*}
	where last equality on the boundary equation follows from \eqref{cv_conc-1} by taking $\varphi=1$ as a test function 
	\begin{equation}\label{cv_conc}
		\beta c_u^{p-1} P(\Omega)= -\int_{\partial\Omega} |\nabla u|^{p-2}\frac{\partial u}{\partial \nu} \, d\mathcal{H}^{N-1} =  \int_{\Omega} 1 \, dx = |\Omega|. 
	\end{equation}
	 Recalling the definition of $u_0$ in \eqref{eq:u0}, it holds that $u(x) = u_0(x) + K_m$ where $K_m$ is a constant related to the total mass of insulating $m$ and of $g$.  Indeed,  using that $\int_{\partial\Omega} h_{u} \, d\mathcal{H}^{N-1} = m$, one has
	\[
	m = \int_{\partial\Omega} \left( \dfrac{u-g}{c_u\beta^{\frac{1}{p-1}}} - \dfrac{1}{\beta^{\frac{1}{p-1}}} \right) d\mathcal{H}^{N-1} = \int_{\partial\Omega} \left( \dfrac{u_0 + K_m - g}{c_u\beta^{\frac{1}{p-1}}} - \dfrac{1}{\beta^{\frac{1}{p-1}}} \right) d\mathcal{H}^{N-1},
	\]
	and, as the normalization $\int_{\partial\Omega} u_0 d\mathcal{H}^{N-1} = 0$ is in force and $c_u = \left(\frac{|\Omega|}{\beta P(\Omega)}\right)^{\frac{1}{p-1}}$ from \eqref{cv_conc}, this yields to
	\[
	K_m = \left( \frac{|\Omega|}{\beta P(\Omega)} \right)^{\frac{1}{p-1}} \left( 1 + \frac{m\beta^{\frac{1}{p-1}}}{P(\Omega)} \right) + \overline{g}, \quad \text{where } \overline{g} = \frac{1}{P(\Omega)}\int_{\partial\Omega} g \, d\mathcal{H}^{N-1}.
	\]
	Now, as $h_u>0$ almost everywhere on $\partial\Omega$, then
$u-g>c_u$ almost everywhere on $\partial\Omega$. Hence, by continuity, one gets
	\[
	u_{0}(\sigma)+K_{m}-g(\sigma) \ge c_{u},\quad \forall \sigma\in\partial\Omega;
	\]
	that is
	\[
	-\delta:=\min_{\sigma\in\partial\Omega}\left[ u_{0}(\sigma) - g(\sigma) + \overline{g} \right] \ge c_u-K_m+\overline{g}= -\dfrac{\beta^{\frac{1}{p-1}} c_{u}}{P(\Omega)}m,
	\]
	and $\delta$ is strictly positive. Indeed, by assumption the function $u_0 - g$ is not constant on $\partial\Omega$. Since the average of $u_{0}-g+\bar g$ on $\partial\Omega$ vanishes (recalling that $\int_{\partial\Omega} u_0 \, d\mathcal{H}^{N-1} = 0$), its minimum must be strictly negative.
	In particular, the previous inequality gives that
	\[
	m \ge \dfrac{\delta P(\Omega)}{\beta^{\frac{1}{p-1}} c_{u}}=\delta \dfrac{P^\frac{p}{p-1}(\Omega)}{|\Omega|^{\frac{1}{p-1}}}=:\overline{m},
	\]
	providing that, if $m < \overline{m}$, then $h_{u}$ must vanish in some part of $\partial\Omega$ with positive measure.  This concludes the proof of part i).
	
	\medskip
	
	In order to complete the proof, we are left to show that, if $m > \overline{m}$, then $h_u > 0$ for all $\sigma \in \de\Omega$. 
	To this aim, we construct a candidate pair $(v, h_v)$ whose insulator covers the entire boundary, which, as we have already seen, satisfies a standard Neumann problem. Then, we verify that this candidate is admissible  and that it is the unique global minimizer to \eqref{pbmin}.
	
	Then let us consider the function $v = u_0 + K_m$, where $u_0$ and $K_m$ are, respectively, the function and the constant defined in the first part of the proof.
	First we show that the corresponding insulation density
	\[
	h_v(\sigma) = \dfrac{v(\sigma)-g(\sigma)}{c_v\beta^{\frac{1}{p-1}}} - \dfrac{1}{\beta^{\frac{1}{p-1}}}, \quad \text{with } c_v = \left(\dfrac{|\Omega|}{\beta P(\Omega)}\right)^{\frac{1}{p-1}},
	\]
	belongs to the admissible set $\mathcal{H}_m(\partial\Omega)$.
	The condition $h_v > 0$ is equivalent to $v-g > c_v$. As shown in the previous derivation, this inequality holds if and only if $m > \overline{m}$.

	Moreover, $h_v$ satisfies the integral constraint $\int_{\partial\Omega} h_v \, d\mathcal{H}^{N-1} = m$, as the constant $K_m$ has been fixed to ensure this equality.
	
	Now we focus on showing that the pair $(v,h_v)$ is a solution to \eqref{pbmin}.
	First observe that,
	by construction, $v$ satisfies $-\Delta_p v = 1$ in $\Omega$ and the boundary condition $|\nabla v|^{p-2}\partial_\nu v = -\beta c_v^{p-1}$.

	Therefore, the pair $(v, h_v)$ is a solution to \eqref{eqpbmin} and thus it is a minimizer for $J$ defined in \eqref{defJ}.
	In particular, by Proposition \ref{prop_convexity} and reasoning as in the proof of Theorem \ref{teo_min}, one can show that $(v, h_v)$ is the unique global minimizer for problem \eqref{pbmin}. 
	
	It remains to deal with the case $m=\overline{m}$. In this case one can assume by contradiction that
	$h_u>0$ everywhere on $\partial\Omega$. Then, as the
	minimum defining $\delta$ is again attained, in this case the inequality obtained in part $i)$ holds
	strictly
	which leads to $m>\overline{m}$, a contradiction. Hence $\{h_u=0\}\neq\emptyset$. 	
	This concludes the proof.
	\end{proof}

\begin{remark}
We observe that the value of $\overline m$ in Theorem \ref{teo_breaking} is independent of $\beta$. \triang
\end{remark}

The following corollaries detail the consequences of Theorem \ref{teo_breaking} by analyzing the geometry of $\Omega$ and the profile of $g$.

\begin{corollary}\label{cor_nonpalla}
	Let $\Omega$ be a connected, smooth, bounded and open set of $\mathbb{R}^N$ which is not a ball, and let $f \equiv 1$. Moreover let $m,\beta > 0$ and let $g$ be constant. Then i) and ii) of the statement of Theorem \ref{teo_breaking} hold true.
\end{corollary}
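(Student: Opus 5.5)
The plan is to apply Theorem \ref{teo_breaking}. Since $g$ is constant, its structural alternative holds, namely that ``either $\partial\Omega$ is connected or $g$ is constant''. The remaining nondegeneracy hypothesis is that $\sigma\mapsto u_0(\sigma)-g(\sigma)$ is not constant on $\partial\Omega$. Because $g\equiv \overline g$, this is equivalent to $u_0$ not being constant on $\partial\Omega$. So everything reduces to one claim: \emph{if $\Omega$ is not a ball, then the solution $u_0$ of \eqref{eq:u0} is not constant on $\partial\Omega$.}

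I would prove the claim by contradiction. Suppose $u_0\equiv c$ on $\partial\Omega$; the normalization $\int_{\partial\Omega}u_0\,d\mathcal H^{N-1}=0$ forces $c=0$. Then $u_0$ solves the overdetermined problem
\[
-\Delta_p u_0=1 \ \text{in }\Omega,\qquad u_0=0 \ \text{on }\partial\Omega,\qquad |\nabla u_0|^{p-2}\frac{\partial u_0}{\partial\nu}=-\frac{|\Omega|}{P(\Omega)} \ \text{on }\partial\Omega .
\]
By the weak comparison principle $u_0>0$ in $\Omega$, and by the Neumann condition $|\nabla u_0|=(|\Omega|/P(\Omega))^{\frac1{p-1}}$ is a positive constant on $\partial\Omega$. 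This is Serrin's overdetermined problem for the $p$-Laplacian with constant Neumann datum. The symmetry result for this problem (Garofalo--Lewis for the $p$-Laplacian, or Brock--Henrot / Fragal\`a--Gazzola--Kawohl via a P-function or web-function argument) gives that $\Omega$ is a ball and $u_0$ is radial. This contradicts the assumption that $\Omega$ is not a ball, so $u_0-g$ is nonconstant on $\partial\Omega$. Theorem \ref{teo_breaking} then yields $\overline m>0$ with properties i) and ii).

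The main obstacle is the regularity needed for the Serrin-type theorem. Since $\Omega$ is smooth, $u_0\in C^{1,\alpha}(\overline\Omega)$ by Lieberman's boundary regularity, and $\nabla u_0\neq 0$ near $\partial\Omega$ by the Hopf lemma for the $p$-Laplacian. This is enough to apply the known $p$-Laplacian Serrin theorems, which are stated for $C^{1,\alpha}$ weak solutions on smooth connected domains. Connectedness of $\Omega$ is already assumed, and $\partial\Omega$ may be disconnected: Serrin's theorem excludes that case anyway, since an annular domain cannot satisfy the overdetermined conditions. I would cite the theorem rather than reprove it.
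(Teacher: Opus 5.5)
Your proposal is correct and follows essentially the same route as the paper. The paper also checks the hypotheses of Theorem \ref{teo_breaking}, reduces non-degeneracy (since $g$ is constant) to $u_0$ being non-constant on $\partial\Omega$, and invokes the $p$-Laplacian Serrin overdetermined theorem (citing Fragal\`a--Gazzola--Kawohl); your extra details, namely that the normalization forces the boundary constant to be $0$ and the $C^{1,\alpha}$ regularity, just spell out what the paper leaves implicit.
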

\begin{proof}
	Since $g$ is constant, the connectedness of $\partial\Omega$ is not required in order to apply Theorem \ref{teo_breaking}. We just need to verify that $u_0 - g$ is not constant on $\partial\Omega$, which reduces to requiring that $u_0$ is not constant on the boundary. Since $u_0$ is the solution to the torsional creep problem \eqref{eq:u0} with constant Neumann data, Serrin's overdetermined theorem \cite{fgk} ensures that $u_0$ can be constant on $\partial\Omega$ if and only if $\Omega$ is a ball. Since $\Omega$ is not a ball, $u_0$ is not constant, and the conclusion immediately follows from Theorem \ref{teo_breaking}.
\end{proof}

\begin{corollary}\label{cor_palla_noncostante}
	Let $\Omega$ be a ball and let $f \equiv 1$. Let $m,\beta > 0$ and let $g \in C(\partial\Omega)$ be non-constant. Then i) and ii) of the statement of Theorem \ref{teo_breaking} hold true.
\end{corollary}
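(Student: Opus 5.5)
The plan is to reduce the statement to Theorem \ref{teo_breaking}. Since $\Omega$ is a ball, $\partial\Omega$ is a sphere, which is connected (as $N\ge 2$), and $\Omega$ itself is connected, smooth and bounded. So the topological hypothesis of Theorem \ref{teo_breaking} holds without any assumption on $g$. The only hypothesis left to check is the non-degeneracy condition: the function $\sigma\mapsto u_0(\sigma)-g(\sigma)$ must not be constant on $\partial\Omega$.

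To check this, I will first compute $u_0$ explicitly. Let $\Omega=B_R(x_0)$. Then \eqref{eq:u0} has constant Neumann datum $-|\Omega|/P(\Omega)=-R/N$. The natural candidate is the radial torsion-type function
\[
w(x)=-\frac{p-1}{p}N^{-\frac{1}{p-1}}|x-x_0|^{\frac{p}{p-1}}.
\]
It satisfies $|\nabla w|^{p-2}\nabla w=-\frac{x-x_0}{N}$, hence $-\Delta_p w=1$. On $\partial\Omega$ its normal flux is $-R/N$, which matches the required datum. I then subtract the constant that makes its boundary integral vanish. Uniqueness of $u_0$ (as stated in Theorem \ref{teo_breaking}: solutions of the Neumann problem are determined up to constants, and the constant is fixed by the normalization) gives $u_0=w-c$. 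The key consequence is that $u_0$ is constant on $\partial\Omega$. Alternatively, radial symmetry of the unique solution can be invoked directly: $u_0\circ T$ solves the same problem for every rotation $T$ about $x_0$.

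It then follows that $u_0-g$ equals a constant minus $g$ on $\partial\Omega$. Since $g$ is non-constant by hypothesis, $u_0-g$ is non-constant. Also $g\in C(\partial\Omega)$, as Theorem \ref{teo_breaking} requires. Applying Theorem \ref{teo_breaking} then gives i) and ii).

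The only delicate point is the well-posedness and uniqueness of $u_0$ for general $p$. I will rely on it as asserted in Theorem \ref{teo_breaking}. If needed, uniqueness follows from strict monotonicity of the $p$-Laplacian together with the normalization: testing the difference of two solutions with their difference gives equal gradients, so by connectedness they differ by a constant, and the constant is zero by the zero boundary average.
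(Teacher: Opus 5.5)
Your proposal is correct and follows the same route as the paper. The sphere $\partial\Omega$ is connected, and $u_0$ is constant on $\partial\Omega$ (in fact zero, by the normalization), so $u_0-g$ inherits the non-constancy of $g$ and Theorem \ref{teo_breaking} applies. Your explicit radial formula for $u_0$ and the monotonicity-based uniqueness check supply details that the paper simply asserts.
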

\begin{proof}
	Since $\Omega$ is a ball, $\partial\Omega$ is connected. The auxiliary solution $u_0$ to \eqref{eq:u0} vanishes identically on $\partial\Omega$. Therefore, $u_0 - g = -g$ on $\partial\Omega$. Since $g$ is not constant by assumption, $u_0 - g$ is not constant on $\partial\Omega$. The conclusion follows from Theorem \ref{teo_breaking}.
\end{proof}

\begin{remark}
	From a physical perspective, Corollary \ref{cor_palla_noncostante} implies that even on a perfectly symmetric domain like a ball, if the external environment has temperature fluctuations and the amount of available insulation is small, it is optimal to concentrate all the insulating material exclusively on the colder regions of the boundary (where heat flux is higher), leaving the warmer parts completely exposed. \triang
\end{remark}

\begin{proposition}\label{prop_palla_costante}
	Let $\Omega$ be a ball and let $f \equiv 1$. Let $m,\beta > 0$ and let $g$ be constant. Then for any mass $m>0$, the optimal insulating configuration $h_u$ is strictly positive and covers the entire boundary $\partial\Omega$.
\end{proposition}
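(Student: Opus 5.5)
Let $\Omega=B_R$, $f\equiv 1$ and $g\equiv \overline g$. The plan is to construct explicitly a radial candidate pair with a uniform insulating layer, verify that it solves \eqref{eqpbmin} with the optimality relation \eqref{defh}, and then invoke uniqueness from Theorem \ref{teo_min}. Since $\int_\Omega f\,dx=|B_R|\neq 0$ and the ball is connected, the minimizing pair $(u,h_u)$ is unique. So it suffices to show that the constant profile $h\equiv m/P(B_R)$, paired with the corresponding radial state, is optimal.

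The first step is to take the radial solution of the normalized problem \eqref{eq:u0} on $B_R$:
\[
u_0(x)=\frac{p-1}{p}N^{-\frac{1}{p-1}}\Bigl(R^{\frac{p}{p-1}}-|x|^{\frac{p}{p-1}}\Bigr).
\]
This function satisfies $-\Delta_p u_0=1$ and $|\nabla u_0|^{p-2}\partial_\nu u_0=-(R/N)^{p-1}$ on $\partial B_R$. This boundary value differs from the normalization $-|\Omega|/P(\Omega)$ used in \eqref{eq:u0}, but the two are related through $\beta c^{p-1}$ below, so only the constancy of the flux matters. The function $u_0$ is constant on $\partial B_R$; after subtracting that constant it vanishes there, in agreement with the proof of Corollary \ref{cor_palla_noncostante}. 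I then set $v=u_0+K$ with $K$ a constant to be fixed, $c=\bigl(\frac{R}{N\beta}\bigr)^{\frac1{p-1}}$, which is exactly the value $(|\Omega|/(\beta P(\Omega)))^{1/(p-1)}$ found in \eqref{cv_conc}, and
\[
h_v=\frac{v-\overline g}{c\,\beta^{\frac1{p-1}}}-\frac{1}{\beta^{\frac1{p-1}}}.
\]
Because $v-\overline g$ is constant on $\partial B_R$, $h_v$ is a constant. I choose $K$ so that $\int_{\partial B_R}h_v\,d\mathcal H^{N-1}=m$, which gives $v-\overline g=c\bigl(1+\beta^{\frac1{p-1}}m/P(B_R)\bigr)$ on $\partial B_R$. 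Hence $v-\overline g>c$ on $\partial B_R$ for every $m>0$, so $h_v=m/P(B_R)>0$ everywhere.

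The next step is to check that $(v,h_v)$ solves \eqref{eqpbmin}. Since $1+\beta^{\frac1{p-1}}h_v=(v-\overline g)/c$, the Robin term equals $\beta c^{p-1}$, which matches the Neumann flux $(R/N)^{p-1}$ by the choice of $c$. Moreover $c_v=c$ satisfies \eqref{eq:c_definition}, because the superlevel set $\{|v-g|>c\}$ is all of $\partial B_R$ and the mass constraint holds. Since $J(\cdot,h_v)$ is strictly convex, $v$ minimizes it for fixed $h_v$. Proposition \ref{prop_bordo} then shows that $h_v$ minimizes the boundary term for fixed $v$. Joint convexity of $J$ (Proposition \ref{prop_convexity}) upgrades this partial stationarity to global optimality, reasoning exactly as in the last part of the proof of Theorem \ref{teo_breaking} ii). Uniqueness then gives $(u,h_u)=(v,h_v)$.

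Equivalently, in the notation of Theorem \ref{teo_breaking}, $u_0-g$ is constant on $\partial B_R$, so $\delta=0$ and $\overline m=0$; every $m>0$ then falls in regime ii). I expect the main delicate point to be the rigorous passage from a pair that is optimal in each variable separately to a joint minimizer. The paper handles this only briefly in Theorem \ref{teo_breaking}. If needed, I would argue directly: for any competitor $(w,k)$, convexity of $J$ along the segment joining $(v,h_v)$ to $(w,k)$, together with vanishing of the directional derivatives in $v$ (from the PDE) and nonnegativity of the directional derivatives in $h$ (from the computation $I\ge 0$ in Proposition \ref{prop_bordo}), yields $J(w,k)\ge J(v,h_v)$.
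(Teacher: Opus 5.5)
Your proposal is correct and is, in substance, the paper's argument written out in full. The paper's proof is a one-line reduction. On the ball $u_0\equiv0$ on $\partial\Omega$ and $g$ is constant, so $\delta=0$ and $\overline m=0$, and part ii) of Theorem \ref{teo_breaking} applies for every $m>0$. The proof of that part is exactly your construction $v=u_0+K_m$, $c_v=\bigl(|\Omega|/(\beta P(\Omega))\bigr)^{\frac1{p-1}}$, $h_v$ as in \eqref{defh}, followed by convexity and uniqueness.

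Doing it directly, as you do, gains two things. First, Theorem \ref{teo_breaking} formally assumes that $u_0-g$ is \emph{not} constant on $\partial\Omega$, which is exactly what fails here. The citation is harmless, since non-degeneracy is only used to get $\delta>0$, but your verification does not need it at all. Second, you make explicit the step from optimality in each variable separately to joint optimality. The paper only sketches this step, and it does not follow from joint convexity alone without some differentiability. Your directional-derivative argument closes it:
\begin{itemize}
\item $t\mapsto J\bigl((1-t)(v,h_v)+t(w,k)\bigr)$ is convex;
\item the boundary integral can be differentiated at $t=0^+$ by monotone convergence, because $v-\overline g$ is a positive constant on $\partial\Omega$ and the denominators stay $\ge1$;
\item the $v$-part vanishes by the weak form of \eqref{eqpbmin} tested with $w-v$;
\item the $h$-part equals $-\frac{(p-1)\beta^{\frac{p}{p-1}}}{p}\,c^p\int_{\partial\Omega}(k-h_v)\,d\mathcal H^{N-1}=0$, because $|v-g|/(1+\beta^{\frac1{p-1}}h_v)\equiv c$ and both profiles have mass $m$.
\end{itemize}

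One computational slip should be fixed. For your $u_0$ one has $|u_0'|^{p-2}u_0'(r)=-r/N$, so the boundary flux is $-R/N$, not $-(R/N)^{p-1}$. Since $|B_R|/P(B_R)=R/N$, this \emph{is} the normalization in \eqref{eq:u0}. So your $u_0$ is exactly the solution of \eqref{eq:u0}, and it already vanishes on $\partial B_R$, with nothing to subtract. With the correct flux, $\beta c^{p-1}=R/N$ gives the Robin condition. The match with $(R/N)^{p-1}$ that you state is false unless $p=2$ (or $R=N$). Your value of $c$ was right; only that intermediate claim was not.
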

\begin{proof}
	In this symmetric setting, the auxiliary solution $u_0$ to \eqref{eq:u0} vanishes identically on $\partial\Omega$, and $g$ is constant. Consequently, the minimum $\delta$ defined in the proof of Theorem \ref{teo_breaking} as $-\delta = \min_{\partial\Omega} [u_0 - g + \overline{g}]$ is exactly zero. This implies that the critical mass $\overline{m}$ defined as $\delta P^\frac{p}{p-1}(\Omega) / |\Omega|^{\frac{1}{p-1}}$ is zero. Therefore, for any strictly positive amount of insulating material $m>0$, we are in the regime $m > \overline{m} = 0$, ensuring $h_u > 0$ on the whole boundary.
\end{proof}

\subsection{Examples and remarks}
\label{sec:disconnected_counterexample}  

This section is devoted to exploring the sharp nature of the assumptions in Theorem \ref{teo_breaking}. We begin by discussing the connectedness assumption of $\Omega$.

\begin{remark}
The assumption of connectedness of $\Omega$ in Theorem \ref{teo_breaking} is natural. Indeed, when $p=2$, $g=0$ and $\beta=+\infty$, in \cite[Example 2.4]{BBN17} the authors show that concentration breaking holds for any $m>0$ when $\Omega$ is the union of two disjoint balls of different radii.
\triang
\end{remark}

Next, we investigate the necessity of the non-degeneracy condition, which requires $u_0 - g$ to be non-constant on the boundary $\partial\Omega$. 
To illustrate its critical role, we present an explicit example showing how the threshold $\overline{m}$ collapses when this assumption is violated.

\begin{example}
	Let $\Omega$ be an ellipse, and let $u_0$ be the solution to \eqref{eq:u0}. Since $\Omega$ is not a ball, Serrin's overdetermined theorem guarantees that $u_0$ cannot be constant on the boundary $\partial\Omega$. 
	However, if we fix $g(\sigma) = u_0(\sigma)$ for all $\sigma \in \partial\Omega$, we obtain $u_0 - g \equiv 0$.
	In this scenario, the parameter $\delta$ introduced in the proof of Theorem \ref{teo_breaking} vanishes. As a direct consequence, the critical mass threshold collapses to zero ($\overline{m} = 0$). \triang  	
\end{example}
In particular, the above example	shows that, for any strictly positive amount of insulating material $m > 0$, the optimal thickness $h_u$ remains strictly positive and uniform along the entire boundary. The concentration breaking phenomenon never occurs, proving that a non-constant $u_0-g$ is strictly necessary to have concentration breaking.

\medskip

Next we discuss the hypothesis ``$\partial\Omega$ connected or $g$ constant'' given in Theorem \ref{teo_breaking} which, in general, is a necessary assumption to the validity of the result.

Before constructing an explicit example, let us explain how Theorem \ref{teo_breaking} produces a single threshold which gives full isolation or concentration breaking.

Suppose the optimal layer covers the whole boundary, namely $h_u>0$ on $\partial\Omega$ which means \ $|u-g|>c_u$ there. By \eqref{defh} the solution $u$, when $p=2$ and $\beta=1$, solves the Neumann problem
\begin{equation}\label{eq:rem_neumann}
-\Delta u=1 \ \text{ in }\Omega,\qquad \partial_\nu u=-c_u\operatorname{sign}(u-g)\ \text{ on }\partial\Omega.
\end{equation}
Now observe that, in the proof of Theorem \ref{teo_breaking} the hypothesis ``$\partial\Omega$ connected or $g$ constant'' is used just to force the sign of $u-g$ to be the same on the whole boundary, namely $u-g>c_u$ everywhere. Indeed, if $\partial\Omega$ is connected the continuous function $u-g$ cannot change sign, while if $g$ is constant the strong maximum principle forbids $u-g<-c_u$ on any boundary component; in either case the globally negative sign is excluded by the divergence theorem. Once the sign is uniform, $\operatorname{sign}(u-g)\equiv 1$ and \eqref{eq:rem_neumann} becomes exactly the problem \eqref{eq:u0} defining $u_0$. Hence $u=u_0+K_m$ is a family of solutions depending on $m$, and ``$h_u>0$ everywhere'' reduces to the inequality $K_m>c_u-\min_{\partial\Omega}(u_0-g)$, which gives the critical mass $\overline m$.

In particular the mentioned hypothesis thus plays exactly one role: it ensures a unique admissible sign pattern for $u-g$. 
As we will see in the next example, when $\partial\Omega$ is disconnected and a non-constant $g$ is present, different sign patterns may be possible. 
Let us explicitly construct the example we have in mind to show that a second sign pattern survives and generates a different situation with respect to the one of Theorem \ref{teo_breaking}.

\medskip
\begin{example}\label{exane}
For $m>0$ let us explicitly construct the unique solution to  \eqref{pbmin} where $p=2$, $\beta=1$, $f\equiv 1$, and
\[
\Omega=\Big\{x\in\mathbb{R}^2:\ \tfrac12<|x|<1\Big\},\qquad
\Gamma_{1/2}:=\{|x|=\tfrac12\},\qquad \Gamma_1:=\{|x|=1\},
\]
so that $\partial\Omega=\Gamma_{1/2}\cup\Gamma_1$ has two connected components and
\[
|\Omega|=\tfrac{3\pi}{4},\qquad \mathcal{H}^1(\Gamma_{1/2})=\pi,\qquad \mathcal{H}^1(\Gamma_1)=2\pi,\qquad P(\Omega)=3\pi.
\]
The external temperature $g\in C(\partial\Omega)$ is piecewise constant on the two components,
\[
g\equiv M_{1/2} \ \text{ on } \Gamma_{1/2},\qquad g\equiv M_{1} \ \text{ on } \Gamma_1,\qquad
s:=M_{1/2}-M_{1},
\]
where $M_{1/2},M_{1} \in \mathbb{R}$.
Under the previous set of assumptions we then assume that $h_u$ is strictly positive on $\partial\Omega$
so that $u$ satisfies \eqref{eq:rem_neumann}.

\smallskip
\textit{Step 1. The admissible sign patterns.}
First observe that, by standard boundary regularity for the $p$-Laplacian, $u\in C(\overline\Omega)$.
Hence, if $h_u>0$ on all of $\partial\Omega$, it follows from \eqref{h_conc} that the sign of $u-g$ is
constant on each component; then we define
\[
\varepsilon_{1/2}:=\operatorname{sign}(u-g)\big|_{\Gamma_{1/2}}\in\{\pm1\},\qquad
\varepsilon_1:=\operatorname{sign}(u-g)\big|_{\Gamma_1}\in\{\pm1\},
\]
and, by \eqref{eq:rem_neumann}, $\de_\nu u=-c_u\varepsilon_{1/2}$ on $\Gamma_{1/2}$ and
$\de_\nu u=-c_u\varepsilon_1$ on $\Gamma_1$ ($\nu$ denotes the outer unit normal of $\Omega$).
Taking $\varphi\equiv 1$ as a test function in \eqref{eq:rem_neumann}, one gets the compatibility
condition
\[
|\Omega|=c_u\big(\varepsilon_{1/2}\,\mathcal{H}^1(\Gamma_{1/2})+\varepsilon_1\,\mathcal{H}^1(\Gamma_1)\big)
=c_u\,\pi\,(\varepsilon_{1/2}+2\varepsilon_1),
\]
that is, as $|\Omega|>0$ and $c_u>0$, we need $\varepsilon_{1/2}+2\varepsilon_1>0$. This algebraic
condition, which does not depend on $M_{1/2},M_{1}$, immediately rules out the patterns
$(\varepsilon_{1/2},\varepsilon_1)$ equal to $(1,-1)$ or to $(-1,-1)$, leaving exactly the two
admissible configurations $\epsilon^{(1)}=(\varepsilon^{(1)}_{1/2},\varepsilon^{(1)}_1)=(1,1)$ and $\epsilon^{(2)}=(\varepsilon^{(2)}_{1/2},\varepsilon^{(2)}_1)=(-1,1)$. Also observe that, as shown in the proof of the
concentration breaking theorem, $\epsilon^{(1)}$ is the only admissible pattern under the assumptions of
Theorem \ref{teo_breaking}. Then simple calculations give
\[
c_1=\frac{|\Omega|}{3\pi}=\frac14, \qquad
c_2=\frac{|\Omega|}{\pi}=\frac34,
\]
where $c_1$ and $c_2$ are, respectively, the constants $c_u$ referred to $\epsilon^{(1)}$ and to
$\epsilon^{(2)}$. 

\smallskip
\textit{Step 2. The two auxiliary radial profiles.}
For $i=1,2$ let $v_i$ be the unique solution of
\[
-\Delta v_i=1 \text{ in }\Omega,\quad
\de_\nu v_i=-c_i\,\varepsilon^{(i)}_{1/2} \text{ on }\Gamma_{1/2},\quad
\de_\nu v_i=-c_i\,\varepsilon^{(i)}_1 \text{ on }\Gamma_1,\quad
\int_{\partial\Omega}v_i\,d\mathcal H^1=0.
\]
The equation on $\Omega$ gives $v_i(r)=-\tfrac{r^2}{4}+C_i\log r+D_i$, and the boundary conditions
\[
\begin{cases}
	\de_\nu v_i\big|_{\Gamma_{1/2}}=-v_i'\!\left(\tfrac12\right)=\tfrac14-2C_i=-c_i\varepsilon^{(i)}_{1/2},\\[6pt]
	\de_\nu v_i\big|_{\Gamma_1}=v_i'(1)=-\tfrac12+C_i=-c_i\varepsilon^{(i)}_1,
\end{cases}
\]
provide $C_1=\tfrac14$ and $C_2=-\tfrac14$; finally $\int_{\partial\Omega}v_i\,d\mathcal H^1=0$, i.e.\
$\pi\,v_i(\tfrac12)+2\pi\,v_i(1)=0$, yields
\[
D_1=\tfrac{3}{16}+\tfrac{1}{12}\log2,\qquad D_2=\tfrac{3}{16}-\tfrac{1}{12}\log2,
\]
so that $v_1(\tfrac12)=\tfrac18-\tfrac{\log2}{6}$, $v_1(1)=-\tfrac1{16}+\tfrac{\log2}{12}$,
$v_2(\tfrac12)=\tfrac18+\tfrac{\log2}{6}$, $v_2(1)=-\tfrac1{16}-\tfrac{\log2}{12}$. As in the proof of the concentration-breaking theorem, $v_1$ is
the function $u_0$ of \eqref{eq:u0}; here $u_0-g$ is constant on $\partial\Omega$ if and only if
$v_1(\tfrac12)-M_{1/2}=v_1(1)-M_{1}$, i.e.\ if and only if
\[
s=s_0:=v_1(\tfrac12)-v_1(1)=\tfrac{3}{16}-\tfrac{\log2}{4}\approx0.0142 .
\]
We assume $s\neq s_0$, so that the non-degeneracy hypothesis of Theorem \ref{teo_breaking} holds and the
phenomenon below is due solely to the disconnectedness of $\partial\Omega$.

\smallskip
\textit{Step 3. The mass-balance constant.}
If $h_u>0$ everywhere with pattern $\epsilon^{(i)}$, then $u$ and $v_i$ solve the same Neumann problem,
so $u=v_i+K_m$ with $K_m:=K_{m,\epsilon^{(i)}}\in\mathbb{R}$. As $\int_{\partial\Omega}h_u=m$ reads
$\int_{\partial\Omega}|u-g|=c_i(m+3\pi)$ and $\operatorname{sign}(u-g)=\epsilon^{(i)}$ on each component
(i.e.\ $|u-g|=\epsilon^{(i)}(v_i+K_m-g)$), one gets
\[
K_{m}=\frac{c_i(m+3\pi)-I_i}{\Pi_i},
\qquad
I_i:=\int_{\partial\Omega}\varepsilon^{(i)}(v_i-g)\,d\mathcal H^1,
\qquad
\Pi_i:=\varepsilon^{(i)}_{1/2}\,\mathcal{H}^1(\Gamma_{1/2})+\varepsilon^{(i)}_1\,\mathcal{H}^1(\Gamma_1),
\]
with $\Pi_1=3\pi$, $\Pi_2=\pi$. Using that $\int_{\partial\Omega}v_i=0$, one obtains
\[
I_1=\pi\big(v_1(\tfrac12)-M_{1/2}\big)+2\pi\big(v_1(1)-M_{1}\big)=-\pi\,(M_{1/2}+2M_{1}),
\]
\[
I_2=-\pi\big(v_2(\tfrac12)-M_{1/2}\big)+2\pi\big(v_2(1)-M_{1}\big)
=\pi\Big(M_{1/2}-2M_{1}-\tfrac14-\tfrac{\log2}{3}\Big).
\]
After a simple calculation this provides
\[
K_{m,\epsilon^{(1)}}=\frac{m}{12\pi}+\frac14+\frac{M_{1/2}+2M_{1}}{3},
\qquad
K_{m,\epsilon^{(2)}}=\frac{3m}{4\pi}+\frac52+\frac{\log2}{3}-M_{1/2}+2M_{1} .
\]

\smallskip
\textit{Step 4. Double transition.}
Now observe that the pattern $\epsilon^{(i)}$ is realized if and only if
\[
\Phi_i(\sigma)+\epsilon^{(i)}\,K_{m,\epsilon^{(i)}}>c_i\qquad\forall\,\sigma\in\partial\Omega,
\qquad \Phi_i(\sigma):=\epsilon^{(i)}\big(v_i(\sigma)-g(\sigma)\big).
\]
Since $K_{m,\epsilon^{(i)}}$ is increasing in $m$, a component with $\varepsilon^{(i)}=1$ gives a lower
bound for $m$, while a component with $\varepsilon^{(i)}=-1$ gives an upper bound. Hence the uniform
pattern $\epsilon^{(1)}=(1,1)$ produces only lower bounds, whereas the mixed pattern
$\epsilon^{(2)}=(-1,1)$ produces one lower and one upper bound.

For $\epsilon^{(1)}=(1,1)$ we have $\Phi_1=v_1-g$, and the two lower bounds (on $\Gamma_{1/2}$ and on
$\Gamma_1$) read $K_{m,\epsilon^{(1)}}>\tfrac18+\tfrac{\log2}{6}+M_{1/2}$ and
$K_{m,\epsilon^{(1)}}>\tfrac{5}{16}-\tfrac{\log2}{12}+M_{1}$. 
In particular it holds that 
$$(\tfrac18+\tfrac{\log2}{6}+M_{1/2}) - (\tfrac{5}{16}-\tfrac{\log2}{12}+M_{1}) =s-s_0,$$ 
and, for $s>s_0$, the threshold is given by the calculation found for $\Gamma_{1/2}$ and
\[
K_{m,\epsilon^{(1)}}>c_1-\big(v_1(\tfrac12)-M_{1/2}\big)
\quad\Longleftrightarrow\quad
m>\overline m_1:=8\pi s-\frac{3\pi}{2}+2\pi\log2 = 8\pi (s-s_0),
\]
and $\overline m_1$ is the threshold
$\overline m$ given by Theorem \ref{teo_breaking}.

In this disconnected case the pattern $\epsilon^{(2)}=(-1,1)$ is also available. On $\Gamma_{1/2}$,
where $\varepsilon^{(2)}_{1/2}=-1$,
\[
K_{m,\epsilon^{(2)}}<M_{1/2}-v_2(\tfrac12)-c_2
\quad\Longleftrightarrow\quad
m<b_2:=\frac{8\pi}{3}\Big(s-\tfrac{27}{16}-\tfrac{\log2}{4}\Big),
\]
while on $\Gamma_1$, where $\varepsilon^{(2)}_1=1$,
\[
K_{m,\epsilon^{(2)}}>c_2-\big(v_2(1)-M_{1}\big)
\quad\Longleftrightarrow\quad
m>a_2:=\frac{4\pi}{3}\Big(s-\tfrac{27}{16}-\tfrac{\log2}{4}\Big)=\tfrac12\,b_2 .
\]
Hence $\epsilon^{(2)}$ is realized if and only if $m\in(a_2,b_2)$, which is nonempty when the
jump is large enough, that is
\[
s>s^\ast:=\tfrac{27}{16}+\tfrac{\log2}{4}\approx1.8608.
\]
Also observe that, in this case and by a simple calculation, $\overline{m_1}> b_2$. 

\smallskip

Let us finally observe that a standard calculation also gives that the constructed pair $(v_i+K_{m,\epsilon^{(i)}},h_u)$ coincides with the unique global minimizer of Theorem \ref{teo_min}.
\triang
\end{example}
\begin{remark}
	Some comments are in order regarding the previous example. When $s>s^\ast$ one has $0<a_2<b_2<\overline m_1$, and the optimal layer behaves as
	\[
	\underbrace{[0,a_2]}_{h_u=0\text{ somewhere}}\ \cup\
	\underbrace{(a_2,b_2)}_{h_u>0\text{ everywhere}}\ \cup\
	\underbrace{[b_2,\overline m_1]}_{h_u=0\text{ somewhere}}\ \cup\
	\underbrace{(\overline m_1,\infty).}_{h_u>0\text{ everywhere}}
	\]
	Then the previous example, which gives a sort of double transition, shows that Theorem \ref{teo_breaking} is actually optimal. In particular, observe that the half-line $(\overline{m}_1, \infty)$ corresponds to the regime governed by the pattern $\epsilon^{(1)} = (1,1)$ associated with the auxiliary function $u_0$; this is actually in accordance with Theorem \ref{teo_breaking}. Conversely, the extra interval $(a_2, b_2)$ arises from the mixed pattern $\epsilon^{(2)} = (-1,1)$ which is the sign configuration that the hypotheses '$\partial\Omega$ connected' or '$g$ constant' force to exclude.
		
	\medskip
	
	Let us also highlight that, in the previous example, two critical values appear
	\[
	s_0:=\tfrac{3}{16}-\tfrac{\log2}{4}\approx0.0142,\qquad
	s^\ast:=\tfrac{27}{16}+\tfrac{\log2}{4}\approx1.8608,
	\]
	which represent the critical sizes of the temperature jump for $g$ across $\Gamma_{1/2}$ and $\Gamma_1$.
	Let us also stress that the calculations in the previous example are made explicit for $s>s_0$; when $s<s_0$ the roles of the two components are exchanged: the binding
	constraint becomes the one on $\Gamma_1$, and the very same computation performed
	above gives the threshold
	\[
	\overline{m}_1' := 4\pi (s_0-s).
	\]
	Equivalently, for any $s$, the threshold 
	is
	\[
	\overline{m}=\max\bigl\{\,8\pi (s-s_0),\ 4\pi (s_0-s)\,\bigr\},
	\]
	the maximum selecting the component of $\partial\Omega$ which is left uncovered first. 
	Then let us summarize the behaviour of the optimal layer
	\begin{itemize}
		\item[$(a)$] \emph{$s>s^\ast$.} Here $0<a_2<b_2<\overline m_1$ and one has four regimes with
		the double transition described above: $h_u=0$ on one component for $m\in[0,a_2]\cup[b_2,\overline m_1]$
		and $h_u>0$ on all of $\partial\Omega$ for $m\in(a_2,b_2)\cup(\overline m_1,\infty)$.
		\item[$(b)$] \emph{$s_0<s\le s^\ast$.} Now $a_2,b_2\le0$ and $(a_2,b_2)$ disappears. The
		single threshold $\overline m_1$ remains, with $h_u=0$ on $\Gamma_{1/2}$
		for $m\le \overline m_1$ and $h_u>0$ everywhere for $m>\overline m_1$. This is exactly the behaviour of
		Theorem \ref{teo_breaking}.
		\item[$(c)$] \emph{$s=s_0$.} Here $u_0-g$ is constant on $\partial\Omega$, the
		non-degeneracy hypothesis fails, $\overline m_1=0$, and $h_u>0$ everywhere for every $m>0$: concentration
		breaking never occurs.
		\item[$(d)$] \emph{$s<s_0$.} Here the first part of boundary to break becomes $\Gamma_1$; one still has a single
		threshold $\overline{m}_1'$, with $h_u=0$ on $\Gamma_1$
		for $m\le \overline{m}_1'$ and $h_u>0$ everywhere for $m>\overline{m}_1'$. 
	\end{itemize}
	Thus the failure of the single-threshold theorem occurs if and only if the jump's size is large enough, that is $s>s^\ast$. 
	
	We conclude with a summary table.
	
	\begin{center}\small
		\begin{tabular}{lll}
			\hline
			jump $s=M_{1/2}-M_{1}$ & structure & behaviour of $h_u$\\
			\hline
			$s>s^\ast\approx1.861$ & 4 regimes & double transition \\
			$s_0<s\le s^\ast$ & 2 regimes & single threshold $\overline m_1$ (no isolation on $\Gamma_{1/2}$)\\
			$s=s_0\approx0.014$ & --- & $u_0-g$ constant, $\overline m_1=0$, no breaking\\
			$s<s_0$ & 2 regimes & single threshold $\overline{m}_1'$ (no isolation on $\Gamma_1$)\\
			\hline
		\end{tabular}
	\end{center}
	\triang
\end{remark}

\appendix
 \section{Appendix}
In this Appendix, we collect a few auxiliary inequalities that are used throughout the paper.
\begin{lemma}
	Let $a, b\in \mathbb{R}^N$, let $p > 1$ and let $\lambda,\eta > 0$. Then it holds
	\begin{equation}\label{lem_dis}
		\frac{|a-b|^p}{\lambda^{p-1}} + \eta |a|^p \ge \frac{\eta |b|^p}{(1 + \eta^{\frac{1}{p-1}} \lambda)^{p-1}}.
	\end{equation}
\end{lemma}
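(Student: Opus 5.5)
The plan is to reduce the vector inequality to a scalar one via the triangle inequality, and then close it with a weighted H\"older inequality for two-term sums. The exponents are chosen so that the two terms on the left-hand side of \eqref{lem_dis} appear exactly as $p$-th powers. This is the standard way to get the ``series resistance'' formula: the layer resistance $\lambda^{p-1}$ and the convective resistance $\eta^{-1}$ combine into the effective coefficient on the right.

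First, since $b=(b-a)+a$, I get $|b|\le |a-b|+|a|$. Next I rewrite each summand as a product that isolates the quantities entering the left-hand side:
\[
|a-b| = \lambda^{\frac{p-1}{p}}\cdot\frac{|a-b|}{\lambda^{\frac{p-1}{p}}},\qquad |a| = \eta^{-\frac1p}\cdot \eta^{\frac1p}|a|.
\]
Applying the discrete H\"older inequality in $\mathbb{R}^2$ with exponents $\frac{p}{p-1}$ and $p$ gives
\[
|b|\le \Big(\lambda+\eta^{-\frac{1}{p-1}}\Big)^{\frac{p-1}{p}}\Big(\frac{|a-b|^p}{\lambda^{p-1}}+\eta|a|^p\Big)^{\frac1p}.
\]
Here I use $\big(\lambda^{\frac{p-1}{p}}\big)^{\frac{p}{p-1}}=\lambda$ and $\big(\eta^{-\frac1p}\big)^{\frac{p}{p-1}}=\eta^{-\frac{1}{p-1}}$.

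Raising both sides to the power $p$ yields
\[
\frac{|a-b|^p}{\lambda^{p-1}}+\eta|a|^p\ \ge\ \frac{|b|^p}{\big(\lambda+\eta^{-\frac1{p-1}}\big)^{p-1}}.
\]
It remains to simplify the right-hand side. Factoring $\eta^{-\frac{1}{p-1}}$ out of the bracket gives $\big(\lambda+\eta^{-\frac1{p-1}}\big)^{p-1}=\eta^{-1}\big(1+\eta^{\frac1{p-1}}\lambda\big)^{p-1}$, which is exactly \eqref{lem_dis}.

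There is no real obstacle here. The only delicate point is choosing the H\"older weights correctly; checking the exponent bookkeeping as above settles it. As a sanity check, equality should occur when $a$ is a nonnegative multiple of $b$ and the two H\"older factors are proportional. This matches the linear profile used in the limsup construction in the proof of Theorem \ref{teo_gammaconv}, which confirms that the constant in \eqref{lem_dis} is sharp.
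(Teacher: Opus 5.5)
Your proof is correct and follows essentially the same route as the paper. The paper also starts from the triangle inequality $|b|\le|a|+|a-b|$. It then uses the weighted bound $(x+y)^p\le x^p/\gamma^{p-1}+y^p/(1-\gamma)^{p-1}$ with $1/\gamma=1+\eta^{\frac{1}{p-1}}\lambda$, proved by a one-variable minimization. That bound is exactly your two-term H\"older step after normalizing your weights $\lambda$ and $\eta^{-\frac{1}{p-1}}$, which give the same $\gamma$.
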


\begin{proof}
	We first prove that, for any $x, y \ge 0$, $p > 1$, and $\gamma \in (0,1)$, it holds
	\begin{equation} \label{aux}
		(x+y)^p \le  \frac{x^p}{\gamma^{p-1}} + \frac{y^p}{(1-\gamma)^{p-1}},
	\end{equation}
	which is immediate if either  $x=0$ or $y=0$.
	Then assume $x,y > 0$ and observe that in this case inequality \eqref{aux} is equivalent to showing 
	\begin{equation*}
		(1+t)^p \le \frac{1}{\gamma^{p-1}} + \frac{t^{p}}{(1-\gamma)^{p-1}},
	\end{equation*}
	which holds true as it can be easily shown that $g(t) = \displaystyle  \frac{1}{\gamma^{p-1}} + \frac{t^{p}}{(1-\gamma)^{p-1}} - (1+t)^p\ge 0$ for  $t>0$ and it admits a global minimum in $t=\frac{1}{\gamma}-1$.

	Now, in order to prove \eqref{lem_dis}, observe that the triangle inequality yields to
	\begin{equation*} 
		|b|^p \le (|a| + |a-b|)^p.
	\end{equation*}
	Then let us apply \eqref{aux} with $x = |a|$ and $y = |a-b|$ gathering the result into the right-hand of the previous, obtaining
	\begin{equation*}
	|b|^p \le (1 + \eta^\frac{1}{p-1} \lambda)^{p-1} |a|^p + \left(1 + \frac{1}{\eta^\frac{1}{p-1} \lambda}\right)^{p-1} |a-b|^p,
\end{equation*}
	by fixing $\frac{1}{\gamma} =1+ \eta^\frac{1}{p-1} \lambda$. Then, after some simple calculations, the previous inequality gives \eqref{lem_dis}. This concludes the proof.
\end{proof}

We provide a second lemma.
\begin{lemma}
	\label{lemmaconvex}
	If $\varphi(t)$ is a convex function on $\mathbb{R}$, then the function $\displaystyle f(x,y) = y \varphi\left(\frac{x}{y}\right)$ satisfies
	\begin{equation}\label{disconvex}
	\frac{1}{2} f(x_1, y_1) + \frac{1}{2} f(x_2, y_2) \ge f\left(\frac{x_1+x_2}{2}, \frac{y_1+y_2}{2}\right),\qquad \forall x_1,x_2\in \mathbb R,\;\forall y_1,y_2>0.
	\end{equation}
	 Moreover, if $\varphi$ is strictly convex, equality holds if and only if there exists $\lambda > 0$ such that $(x_1, y_1) = \lambda(x_2, y_2)$.
\end{lemma}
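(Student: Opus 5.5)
The approach is to reduce to the one-dimensional convexity of $\varphi$ by exploiting the positive homogeneity of the perspective function. Put $Y := \frac{y_1+y_2}{2} > 0$ and $X := \frac{x_1+x_2}{2}$. The weights are $\theta := \frac{y_1}{2Y}$ and $1-\theta = \frac{y_2}{2Y}$, both in $(0,1)$ and summing to one. The key identity is
\[
\frac{X}{Y} = \theta\,\frac{x_1}{y_1} + (1-\theta)\,\frac{x_2}{y_2},
\]
which follows at once since $\theta \frac{x_1}{y_1} = \frac{x_1}{2Y}$ and $(1-\theta)\frac{x_2}{y_2} = \frac{x_2}{2Y}$. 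Convexity of $\varphi$ then gives
\[
\varphi\Big(\frac{X}{Y}\Big) \le \theta\,\varphi\Big(\frac{x_1}{y_1}\Big) + (1-\theta)\,\varphi\Big(\frac{x_2}{y_2}\Big).
\]
Multiplying by $Y > 0$ turns the right-hand side into $\frac{1}{2}y_1\varphi(x_1/y_1) + \frac{1}{2}y_2\varphi(x_2/y_2)$, which is exactly \eqref{disconvex}.

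For the equality case, assume $\varphi$ is strictly convex. Since $\theta \in (0,1)$, equality in the displayed inequality (after multiplying by $Y>0$) holds if and only if $\frac{x_1}{y_1} = \frac{x_2}{y_2}$. If this ratio is common, set $\lambda := y_1/y_2 > 0$; then $x_1 = \frac{x_2}{y_2}\,y_1 = \lambda x_2$, so $(x_1,y_1) = \lambda (x_2,y_2)$. Conversely, if $(x_1,y_1) = \lambda(x_2,y_2)$ with $\lambda>0$, the ratios coincide and equality holds trivially, because then both sides equal $\frac{1+\lambda}{2}\,y_2\varphi(x_2/y_2)$.

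I do not expect a genuine obstacle here. The only delicate point is keeping track that $\theta$ lies strictly between $0$ and $1$, which uses $y_1, y_2 > 0$. This is what makes strict convexity force equality of the two ratios, and it is also why the statement is restricted to positive $y_i$.
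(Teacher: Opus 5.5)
Your proposal is correct and follows essentially the same route as the paper. Both write $\frac{x_1+x_2}{y_1+y_2}$ as the convex combination of $x_1/y_1$ and $x_2/y_2$ with weight $y_1/(y_1+y_2)$, apply convexity of $\varphi$, multiply by $\frac{y_1+y_2}{2}$, and settle the equality case through strict convexity with $\lambda = y_1/y_2$. You also make explicit that this weight lies strictly in $(0,1)$, which the paper uses only implicitly.
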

\begin{proof}
	Let us first assume that $\varphi$ is convex and let us prove \eqref{disconvex}; namely we have to prove that
	\begin{equation}\label{eq:goal}
		\frac{1}{2} y_1 \varphi\left(\frac{x_1}{y_1}\right) + \frac{1}{2} y_2 \varphi\left(\frac{x_2}{y_2}\right) \ge \left(\frac{y_1+y_2}{2}\right) \varphi\left(\frac{x_1+x_2}{y_1+y_2}\right).
	\end{equation}
	First of all, we can rewrite the argument of $\varphi$ on the right-hand side as
	\begin{equation*}
		\frac{x_1+x_2}{y_1+y_2} = \frac{y_1}{y_1+y_2}\left(\frac{x_1}{y_1}\right) + \frac{y_2}{y_1+y_2}\left(\frac{x_2}{y_2}\right).
	\end{equation*}
	Then fixing $t = \frac{y_1}{y_1+y_2}$, $z_1 = \frac{x_1}{y_1}$ and $z_2 = \frac{x_2}{y_2}$, since $\varphi$ is a convex function, we have
	\[
	\varphi(t z_1 + (1-t) z_2) \le t \varphi(z_1) + (1-t) \varphi(z_2)
	\]
	which substituting back the definitions of $t, z_1, z_2$ and multiplying both sides by $\frac{y_1+y_2}{2}$, gives \eqref{eq:goal}.
	
	\medskip
	
	Now let us assume that $\varphi$ is strictly convex.
	We need to prove that equality in \eqref{eq:goal} holds if and only if the vectors $(x_1, y_1) =\lambda(x_2, y_2)$ for some $\lambda>0$.

	We first assume there exists a constant $\lambda > 0$ such that $(x_1, y_1) = \lambda(x_2, y_2)$ and we prove by a simple calculation that equality needs to occur.
	Indeed the left-hand side of \eqref{eq:goal} gives
	\begin{align*}
		\frac{1}{2} y_1 \varphi\left(\frac{x_1}{y_1}\right) + \frac{1}{2} y_2 \varphi\left(\frac{x_2}{y_2}\right)
		= \frac{1}{2} \lambda y_2 \varphi\left(\frac{x_2}{y_2}\right) + \frac{1}{2} y_2 \varphi\left(\frac{x_2}{y_2}\right) 
		= \frac{1+\lambda}{2} y_2 \varphi\left(\frac{x_2}{y_2}\right)
	\end{align*}
	while the right-hand side of \eqref{eq:goal} yields
	\begin{align*}
		\left(\frac{y_1+y_2}{2}\right) \varphi\left(\frac{x_1+x_2}{y_1+y_2}\right) 
		= \left(\frac{(1+\lambda)y_2}{2}\right) \varphi\left(\frac{(1+\lambda)x_2}{(1+\lambda)y_2}\right) 
		= \frac{1+\lambda}{2} y_2 \varphi\left(\frac{x_2}{y_2}\right).
	\end{align*}

	Conversely, assume that equality holds in \eqref{eq:goal}. Then, as already done in the first part of the proof, we can rewrite the equality as 
	\[
	\varphi\left(\frac{y_1}{y_1+y_2} \frac{x_1}{y_1} + \frac{y_2}{y_1+y_2} \frac{x_2}{y_2}\right) = \frac{y_1}{y_1+y_2} \varphi\left(\frac{x_1}{y_1}\right) + \frac{y_2}{y_1+y_2} \varphi\left(\frac{x_2}{y_2}\right).
	\]
	In particular, as $\varphi$ is strictly convex, the previous equality holds if and only if 
	\[
	\frac{x_1}{y_1} = \frac{x_2}{y_2},
	\]
	which implies that $(x_1, y_1) = \lambda (x_2, y_2)$ for some $\lambda = \frac{y_1}{y_2} > 0$.
	This concludes the proof.
\end{proof}

\section*{Acknowledgements}
The authors wish to thank the GNAMPA-INdAM for partial support of the present work. The authors also thank the anonymous referee for his/her careful reading of the paper, which helped to improve the manuscript.

\end{document}